\newcommand\s[1]{\mathds{#1}}
\newcommand{\N}{{\mathbb N}}
\newcommand{\bz}{{\bf 0}}
\def\set#1{\{#1\}}
\def\pair#1{\langle{#1}\rangle}
\def\bz{{\bf 0}}
\def\ve{\varepsilon}
\def\s{\sigma}
\def\S{\Sigma}
\def\l{\lambda}
\def\restrict#1{\raise-.5ex\hbox{\ensuremath|}_{#1}}
\def\ca{{\mathcal A}}
\def\ct{{\mathcal T}}
\def\cp{{\mathcal P}}
\def\wid#1{|#1|}
\def\T{\Theta}
\definecolor{olive}{rgb}{0.5, 0.5, 0.0}
 \newcommand{\leaf}[1]{\begin{picture}(10,10)(5,10)\put(5,5){#1}\end{picture}}
 \newcommand{\lb}[1]{\begin{picture}(10,20)(10,20) \put(10,20){\circle*{2}}%
     \put(0,0){\line(1,2){10}}\put(0,0){#1}\end{picture}}
 \newcommand{\rb}[1]{\begin{picture}(10,20)(10,20)%
     \put(10,0){\line(-1,2){10}}\put(10,0){#1}\end{picture}}
\newcommand{\fut}[2]{\begin{picture}(20,20)%
    \put(0,0){\lb{#1}}\put(10,0){\rb{#2}}\end{picture}}
\newcommand{\Ttau}{\fut{\leaf {$c$}}{\leaf {$d$}}}
\newcommand{\Fut}[2]{\begin{picture}(40,10)(20,10)%
    \put(0,0){\line(2,1){20}}\put(0,0){#1}%
    \put(40,0){\line(-2,1){20}}\put(40,0){#2}%
 \end{picture}}
\begin{document}

\setcounter{page}{27}
\publyear{22}
\papernumber{2117}
\volume{186}
\issue{1-4}

   \finalVersionForARXIV

\title{Affine Completeness of Some Free Binary Algebras}

\author{Andr\'e Arnold\\
Universit\'e  de Bordeaux, France,\\
aa-labri@sfr.fr
\and Patrick C\'egielski\\
LACL\\
 Universit\'e  Paris-Est Cr\'eteil -- IUT de S\'enart-Fontainebleau, France \\
cegielski@u-pec.fr
\and  Ir\`ene Guessarian\thanks{Address of correspondence:  IRIF,  CNRS \& Universit\'e Paris-Diderot,
                                  Emerita Sorbonne Universit\'e, France}
 \\
IRIF,  CNRS \& Universit\'e Paris-Diderot,\\
Emerita Sorbonne Universit\'e, France\\
ig@irif.fr
}

\maketitle

\runninghead{A. Arnold et al.}{Affine Completeness}

\begin{flushright}
 \small{\it To the memory of Boris Trakhtenbrot, in honor of his  visionary\\
   contribution to theoretical computer science, logics and automata theory}
 \end{flushright}

\begin{abstract}
A function on an algebra  is congruence preserving if, for any
congruence, it maps pairs of congruent elements onto pairs of
congruent elements. An algebra
 is said to be affine complete if every congruence preserving function is a polynomial function. We show that  the algebra of  (possibly empty) binary trees
whose leaves are labeled by letters of an alphabet containing at
least one  letter, and the free monoid on an alphabet containing at least two letters are affine complete.
\end{abstract}

\runninghead{A. Arnold et al.}{Affine Completeness}

\section{Introduction}

A function on an algebra is {\em congruence preserving} if, for any
congruence, it maps pairs of congruent elements onto pairs of
congruent elements.

A {\em polynomial function} on an algebra is any function defined by
a term of the algebra using variables, constants and the operations
of the algebra. Obviously, every polynomial function is  congruence
preserving. An algebra is said to be {\em affine complete} if every
congruence preserving function is a polynomial function.

We proved in \cite{ALUN17} that if $\Sigma$ has at least three
elements,  then the free monoid $\Sigma^*$ generated by
$\Sigma$ is affine complete. If $\Sigma$ has just one letter $a$,
then the free monoid $a^*$ is isomorphic to
$\langle \N, + \rangle$, and we proved in~\cite{cggIJNT} that, e.g.,
$f \colon \N \to \N$ defined by
$f(x) = \texttt{if } x == 0 \texttt{ then }1 \texttt{ else }\lfloor e x!\rfloor$,
where $e = 2.718\dots$ is the Euler number, is congruence
preserving but not polynomial. Thus $\langle \N, + \rangle$, or equivalently the free
monoid $a^*$ with concatenation, is {\em not} affine complete.
Intuitively, this  stems from the fact that the more generators
$\Sigma^*$ has, the more congruences it has too: thus $\N$ with
just one generator, has very few congruences, hence many
functions, including non polynomial ones, can preserve all
congruences of $\N$. We also proved in \cite{acgg20} that, when
$\Sigma$ has at least three letters,  in the algebra of full binary trees with
leaves labelled by letters in $\S$, every unary congruence preserving function is polynomial (from now on, ``Congruence Preserving" is abbreviated as CP).
These previous works  left several open questions.
What happens if $\Sigma$ has one or two letters: for algebras of
trees? for non unary CP functions on trees? for the free monoid
 generated by   two letters? We answer those three questions in the present
paper: all these algebras are affine complete.

For full binary trees and at least three letters in $\S$, the proof of
\cite{acgg20} consisted in showing that CP functions which
coincide on $\S$ are equal, and in building for any CP function $f$
a polynomial $P_f$  such that $f(a)=P_f(a)$ for $a \in \S$,
wherefrom we inferred that $f = P_f$ for any $t$. We now generalize
this result in three ways: we consider arbitrary trees (with labelled
leaves) where the empty tree is allowed, the alphabet $\S$ may
have  one or two letters instead of  at least three, and CP functions of any
arity are allowed. Our method mostly uses congruences
$\sim_{u, v}$ which substitute for occurrences of a tree $u$ a
smaller tree $v$: in fact,   we even restrict ourselves to   congruences
such that $u$ belongs to a subset $\ct$ which is chosen in
a way ensuring that every congruence class has a unique smallest
canonical representative.
Using these congruences, we build, for each CP function $f$, and each $\tau \in \ct$, a
polynomial $P_\tau$ such that, for trees $u_1, \ldots, u_n$ small
enough, $f(u_1, \ldots, u_n) = P_\tau(u_1, \ldots, u_n)$. We furthermore
show that polynomials which coincide on $\S$ coincide on the
whole algebra, wherefrom we conclude that all the $P_\tau$ are
equal and $f$ is a polynomial.

\medskip
For the free monoid, it remains to answer the question: is $\{a, b\}^*$ equipped with concatenation affine
complete? We show in the present paper that the answer is
positive. The essential tool used in \cite{ALUN17} was the notion
of Restricted Congruence Preserving functions (RCP), i.e.,
functions preserving only the congruences defined by kernels
of endomorphisms
$\langle \Sigma^*, \cdot \rangle \to \langle \Sigma^*, \cdot \rangle$,
which allowed to prove that RCP functions are polynomial, implying that a
fortiori CP functions are polynomial. Unfortunately, the fundamental property
$\cp$ below,   which  was implicitly used when there are three letters,  no longer holds where there are only two letters.

\medskip
$(\cp)$\quad  {\parbox{0.85 \textwidth}{Let $\gamma_{a, b}$ be the homomorphism substituting
$b$ for $a$,
   if $f \colon \Sigma \to \Sigma $ is such that for all $a, b \,\in \S$,
        $\gamma_{a, b}(f(a)) = \gamma_{a,b}(f(b))$ then\\ $f$ is either
        a constant function, or the identity.}
}\medskip

Let $\S=\set{\s_1,\ldots,\s_n}$. When $n = 2$,  property ($\cp$)  no longer
holds hence restricting ourselves to RCP functions cannot help in
proving that CP functions are polynomial. For instance, the function
$f \colon \Sigma^* \to \Sigma^*$ defined by
$f(w) = \s_1^{|w|_{\s_1}}\cdots \s_n^{|w|_{\s_n}} $, where
$|w|_\sigma$ denotes the number of occurrences of the letter
$\sigma$ in $w$, is clearly neither polynomial, nor CP (the
congruence ``to have the same first letter'' is not preserved), even though it is RCP when $n=2$.
Fortunately $f$ is {\em } not RCP when $n\geq 3$, and thus it is not a
counter-example to the result stated in \cite{ALUN17}. Thus, for words in $\S^*$, we here  use a new
method, which also works even when $|\S| = 2$. It is very
similar to the method used for trees, even though the proofs are
more complex because of the associativity of the product
(usually called concatenation) of words.

Most of the proofs of
intermediate Lemmas and Propositions  are identical for trees and for words or have
only minor differences. Important differences, due to
the associativity or non associativity of the product in the
corresponding algebras, are located in the
proofs of just two Assumptions, that we prove separately.

The paper is thus organized as follows. In section 2, we recall the
basics about algebras, polynomials and congruence preserving
functions. In Section \ref{sec:length} we prove that the relation
between the  length of the value of a function and the length of its
arguments is affine for both CP functions and polynomials. In
Section \ref{sec:toolbox} we define the main kind of
congruences we will use and we show how to compute canonical
representatives for these congruences. In section
\ref{sec:cp-functions}, we define  polynomials
associated with a CP function and prove that CP functions are
polynomial assuming   two Assumptions. We prove these two
Assumptions for the algebra of trees (Section \ref{sec:tree}) and for  the free monoid $\{a.b\}^*$ (Section \ref{sec:word}).   Section~\ref{sec:word} ends with an application of
 the result  on lengths of Section  \ref{sec:length} which immediately implies the
  affine completeness of the free commutative monoid $\langle \N^p, +, 0\rangle$ for $p\geq 2$.

\section{ Binary algebras}
Let $\S$ be a nonempty finite alphabet, whose letters will be
denoted by $a, b, c, d, \ldots$.

We consider an algebraic structure $\pair{\ca(\S), \star, \bz}$, with
$\bz \notin \S$, subsuming both the free monoid and the set of
binary trees, satisfying the following axioms (Ax-1), (Ax-2), (Ax-3)
\begin{itemize}
\itemsep=0.9pt
\item[(Ax-1)] $\S \cup \set{\bz} \subseteq \ca(\S)$,
\item[(Ax-2)] if $u \notin \S \cup \set{\bz}$ then
 $\exists v, w \in \ca(\S): u = v\star w$.
\item[(Ax-3)] there exists a mapping $|\cdot| \colon \ca(\S) \to \N$
such that
\begin{itemize}\item $|\bz| = 0$,
\item $|\s| = 1$, for all $\s \in \S$,
\item $|u \star v| = |u| + |v|$.
\end{itemize}
\end{itemize}
\noindent $|u|$ is said to be the {\em length} of $u$ (it is equal to
the number of occurrences of letters of $\Sigma$ in $u$). We
similarly define, for $\sigma \in \Sigma$ and $u \in \ca(\S)$,
$|u|_\sigma$ which is the number  occurrences of the letter
$\sigma$ in $u$.

\medskip
The free monoid and the algebra of binary trees are examples of
such an algebra. If $\ca(\S)$ is the set of words $\S^*$ on the
alphabet $\Sigma$, $\star$ is the (associative) concatenation of
words, and $\bz$ is the empty word $\ve$, we get the free monoid.
If $\ca(\S)$ is the set of binary trees whose leaves are labelled by
letters of $\Sigma$, $t \star t'$ is a tree consisting of a root whose
left subtree is $t$ and  whose right subtree is $t'$, and $\bz$ is the
empty tree then we get the algebra of binary trees. In the case of
trees the operation $\star$ is not associative. The free commutative monoid
$\langle \N^p, +, (0,\ldots,0) \rangle$ is also a binary algebra satisfying
(Ax-1), (Ax-2), (Ax-3).

For our proofs the main difference between trees and the other
examples relates to point (Ax-2) above: the decomposition
$u = v \star w$ is unique for trees and not for the other examples.
\begin{fact} [Unicity of  decomposition]\label{uniquedecompo}
If $t$ is a tree not in $\set{\bz} \cup \, \Sigma$ then there exists a
unique ordered pair $\pair{t_1, t_2} \neq \pair{\bz, \bz}$ in $\+A^2$
such that $t = t_1 \star t_2$.
\end{fact}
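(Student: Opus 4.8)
The proof rests on the concrete reading of the tree operation, which is exactly what distinguishes trees from the associative examples: $t_1 \star t_2$ denotes the tree obtained by creating a fresh root whose left subtree is $t_1$ and whose right subtree is $t_2$. I would treat existence and uniqueness separately. For existence, the hypothesis $t \notin \S \cup \set{\bz}$ lets me apply (Ax-2) directly, which already yields a pair $\pair{t_1, t_2}$ with $t = t_1 \star t_2$; it then remains only to rule out $\pair{t_1, t_2} = \pair{\bz, \bz}$. Here I would invoke the tree-specific fact that the only tree of length $0$ is $\bz$: by (Ax-3) we would have $|t| = |t_1| + |t_2| = 0$, forcing $t = \bz$ and contradicting the hypothesis. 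Equivalently, if one sets up $\ca(\S)$ by the inductive clauses ``$\bz$ and each $\s \in \S$ are trees, and $t_1 \star t_2$ is a tree whenever $\pair{t_1,t_2} \neq \pair{\bz,\bz}$'', then the existence of a decomposition with $\pair{t_1,t_2}\neq\pair{\bz,\bz}$ is immediate from the definition.

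For uniqueness I would show that $\star$ is injective on trees. Suppose $t = t_1 \star t_2 = t_1' \star t_2'$. In each expression the outermost $\star$ corresponds to the unique root of $t$; the subtrees $t_1$ and $t_1'$ are then both equal to the subtree of $t$ hanging to the left of that root, and likewise $t_2 = t_2'$ on the right. Hence $t_1 = t_1'$ and $t_2 = t_2'$. Formally this is the unique-readability property of the terms denoting trees, which I would establish by a routine structural induction on the inductive definition of $\ca(\S)$.

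The one point deserving care is that the whole argument is special to trees and must not be mistaken for a consequence of (Ax-1)--(Ax-3) alone: for words, $\star$ is associative and $\bz = \ve$ is a unit, so the outermost product is not pinned to any canonical position and a single word typically admits many decompositions. Thus the hard part here is conceptual rather than computational --- one must appeal to the structural ``the root determines its two subtrees'' property of trees (unique readability, i.e. injectivity of the root construction), and merely dispatch the degenerate pair $\pair{\bz,\bz}$ via the length function.
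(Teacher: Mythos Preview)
The paper does not prove this statement at all: it is recorded as a \emph{Fact}, i.e.\ an immediate property of the concrete tree algebra (contrasting it with words, where decomposition is not unique), and no argument is supplied. Your write-up is a correct elaboration of why the Fact holds, supplying precisely the two ingredients one needs---existence via (Ax-2) together with the exclusion of the degenerate pair $\pair{\bz,\bz}$, and uniqueness via the unique-readability (injectivity of the root constructor) that is specific to trees. Your closing remark that this is \emph{not} a consequence of (Ax-1)--(Ax-3) alone is exactly the point the paper is making informally when it singles out trees.

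One small caveat on the existence half: the step ``$|t|=0$ forces $t=\bz$'' is not a consequence of (Ax-3) by itself (which only gives the forward direction), so it must be read---as you yourself indicate in the alternative formulation---as part of the inductive \emph{definition} of the tree algebra, where $\bz\star\bz$ is simply not formed (equivalently, every leaf carries a label from $\Sigma$). With that understanding in place your argument is complete, and there is nothing further in the paper to compare it against.
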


An element of $\ca$ (a word or a tree) will be called an {\it object}.

\subsection{Polynomials}
\label{sec:polynomials}

We denote by $\ca$ the set $\ca(\S)$. We also consider the infinite
set of variables $X = \set{x_i \mid i \geq 1}$, disjoint from $\S$. We
denote by $\ca_n$, the set $\ca(\S \cup \set{x_1, \ldots, x_n})$.
Note that $\ca = \ca_0$ and that $\ca_n \subseteq \ca_{n+1}$.

\begin{definition}
A $n$-ary {\em polynomial with variables} $\set{x_1, \ldots , x_n}$ is
an element $P$ of $\ca_n$. The {\em multidegree} of $P$ is the
$n$-tuple $\pair{k_1, \ldots, k_n}$ where $k_i  = |P|_{x_i}$.

\noindent With every such polynomial $P$ we associate a $n$-ary
{\em polynomial function} $\tilde P \colon \ca^n \to \ca$ defined
by:\\
for any $\vec u=\langle u_1,\ldots,u_i,\ldots,u_n\rangle \in \ca^n$,\medskip

$\tilde P(\vec u)=\left\{
    \begin{array}{ll}
      P  & \mbox{if $P = \bz$ or $P \in \S$}\\
      u_i& \mbox{if $P = x_i$}\\
      \widetilde {P_1}(\vec u) \star \widetilde {P_2}(\vec u)& \mbox{if $P = P_1 \star P_2$}
      \end{array}
  \right.$
\end{definition}

\paragraph{Note.} In the case of words we have to prove that the
value of $\widetilde{P}$ is independent of its decomposition
$P = P_1\star P_2$. This is due to the fact that
$\widetilde{P}(\vec{u})$ can be seen as a homomorphic image of
$P$ by an homomorphism from $\ca_n$ to $\ca$.

\medskip

From now on we simply write $P$ instead of $\tilde P$ for denoting
the function associated with the polynomial $P$.

\subsection{Sub-objects}

Let $\ca_{1,1}$ be the set of degree 1 unary polynomials with
variable $y$, i.e., elements $P\in  \ca(\S\cup\set{y})$ such that
$|P|_{y} = 1$,  or objects of $\ca(\S\cup\set{y})$ with exactly
one occurrence of $y$.
\begin{definition}\label{def:reducible}
An element $u$ of $ \ca$ is a {\em sub-object} of an element
$t \in \ca$, if there exists an occurrence of $u$ inside $t$, formally:
if there exists a polynomial $P \in \ca_{1,1}$ such that $P(u) = t$.
\end{definition}
In the case of words (resp. trees), sub-objects are  factors (resp.
subtrees).
\begin{definition}
A {\it sub-polynomial} $Q$ of a polynomial $P \in \ca_n$ is a
sub-object of $P$.
\end{definition}
%

\subsection{Congruence preserving functions}

\begin{definition}
A {\em congruence} on $\pair{\ca, \star,\bz}$ is an equivalence relation
$\sim$ compatible with $\star$, i.e., $s_1\sim s'_1$ and
$s_2\sim s'_2$ imply $s_1\star s_2\sim s'_1\star s'_2$.
\end{definition}
\begin{definition}\label{def1:cp}
A function $f \colon \ca^n \to \ca$ is {\em congruence preserving}
(abbreviated into CP) on $\langle \ca, \star,\bz\rangle$ if, for all
congruences $\sim$ on~$\langle \ca, \star,\bz\rangle$, for all
$t_1, \ldots, t_n, \ t'_1, \ldots, t'_n$ in $\ca$, $t_i \sim t'_i$ for all
$i = 1, \ldots, n$, implies $f(t_1, \ldots, t_n) \sim f(t'_1, \ldots, t'_n)$.
\end{definition}
Obviously,  every polynomial function is CP. Our goal is to prove the
converse, namely
\begin{theorem}\label{E}  Assume $|\S| \geq 2$ for words and
$|\S| \geq 1$ for trees.
If $f \colon \ca(\S)^n \to \ca(\S)$ is CP then there exists a
polynomial $P_f$ such that $f = \widetilde{P_f}$.
\end{theorem}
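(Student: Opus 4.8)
The plan is to reduce the general statement to a controlled local computation using the congruences $\sim_{u,v}$ that collapse occurrences of a larger object $u$ to a smaller one $v$, as announced in the introduction. First I would fix a CP function $f\colon\ca^n\to\ca$ and exploit the length result of Section \ref{sec:length}: since $|f(\vec u)|$ is affine in the lengths $|u_1|,\dots,|u_n|$, there is an a priori bound governing how large the output can be in terms of the inputs. This affine control is essential because it lets me argue that $f$ is determined by its behaviour on finitely many ``small'' patterns of arguments, and it ensures that any candidate polynomial $P_f$ I build has matching length growth, so the two functions have a chance of coinciding everywhere.

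Next I would use the toolbox of Section \ref{sec:toolbox}: restricting to congruences $\sim_{u,v}$ with $u$ in the distinguished set $\ct$, so that each class has a unique smallest canonical representative. The key step is to build, for each $\tau\in\ct$, a polynomial $P_\tau$ such that $f(u_1,\dots,u_n)=P_\tau(u_1,\dots,u_n)$ whenever the arguments $u_i$ are small enough relative to $\tau$. The construction of $P_\tau$ is local: one reads off from the values of $f$ on canonical representatives what term in the $x_i$, the letters of $\S$, and $\star$ must produce the observed output; here the CP hypothesis, applied to the congruences $\sim_{u,v}$, forces consistency between the value of $f$ on an object and the value on its canonical representative, which is exactly what pins down the coefficients of the polynomial.

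The heart of the argument is then a gluing step: I must show that the locally defined polynomials $P_\tau$ do not depend on $\tau$, i.e., that they all coincide. The strategy for this is to prove that two polynomials agreeing on $\S$ (on the arguments drawn from the alphabet, together with $\bz$) must agree on all of $\ca$; combined with the fact that every $P_\tau$ agrees with $f$, hence with every other $P_{\tau'}$, on small arguments including the letters, this yields a single polynomial $P_f$ with $f=\widetilde{P_f}$. This reduction to agreement on $\S$ is what replaces the now-failing property $(\cp)$ for the two-letter alphabet, and it is the reason the same skeleton of proof works uniformly for trees and for words.

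The main obstacle I expect lies precisely in the two Assumptions flagged in the introduction, whose proofs differ between the associative (word) and non-associative (tree) cases. For trees, Fact \ref{uniquedecompo} gives unique decomposition $t=t_1\star t_2$, so reading off the polynomial structure from $f$'s values is comparatively clean. For words, associativity destroys uniqueness of the factorization $u=v\star w$, so the identification of which occurrence of a sub-object is being substituted by a congruence $\sim_{u,v}$ becomes delicate, and controlling the canonical representatives requires genuine combinatorics on words. I therefore expect the bulk of the technical difficulty to be isolated in Sections \ref{sec:tree} and \ref{sec:word}, with the present theorem following formally once both Assumptions are in hand.
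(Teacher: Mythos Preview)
Your plan matches the paper's approach: length control from Section~\ref{sec:length}, the congruences $\sim_{\tau,v}$ with $\tau\in\ct$ and their canonical representatives, local polynomials $P_\tau$ agreeing with $f$ on small arguments, and the gluing via ``polynomials that agree on length-$1$ inputs are equal'' (Proposition~\ref{prop:P=Q}), with the two Assumptions deferred to Sections~\ref{sec:tree} and~\ref{sec:word}. The one ingredient you leave vague is how $P_\tau$ is actually produced: the paper does this by induction on the arity $n$, freezing the last argument to $\tau$ so that the induction hypothesis yields a polynomial $Q_\tau$ in $x_1,\dots,x_n$, and then setting $P_\tau = Red^*_{\tau,x_{n+1}}(Q_\tau)$; your phrase ``reads off from the values of $f$ on canonical representatives'' does not quite capture this, and without the inductive step there is no concrete mechanism to obtain a polynomial from the function values.
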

\noindent  This is the main result of the paper, which will be
proven in Sections \ref{sec:cp-functions}, \ref{sec:tree} and
\ref{sec:word}.

\section{Length condition}\label{sec:length}

For polynomials, as a consequence of (Ax-3), we get:
\begin{fact}\label{sec:length-conditions}
If $P\in \ca_n$ is a polynomial of multidegree
$\pair{k_1, \ldots, k_n}$ then

$|P(u_1, \ldots, u_n)| = |P(\bz, \ldots, \bz)|
 + \sum_{i = 1}^n k_i.|u_i|.$
\end{fact}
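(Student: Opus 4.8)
The plan is to prove the identity by induction on the structure of the polynomial $P \in \ca_n$, viewing $\ca_n = \ca(\S \cup \set{x_1, \ldots, x_n})$ as an instance of our binary algebra over the enlarged alphabet $\S \cup \set{x_1, \ldots, x_n}$. Then (Ax-3) supplies a length function on $\ca_n$ with $|x_i| = 1$ for each variable, so that $k_i = |P|_{x_i}$ is simply the number of occurrences of $x_i$ in $P$. First I would dispatch the base cases prescribed by (Ax-1), namely $P = \bz$, $P = \sigma$ for $\sigma \in \S$, and $P = x_i$. In each case $\widetilde P(\vec u)$ is read off directly from the definition of the polynomial function (it equals $\bz$, $\sigma$, and $u_i$ respectively), and both sides of the claimed identity are seen to agree: for $P = \bz$ both are $0$, for $P = \sigma$ both are $1$, and for $P = x_i$ both equal $|u_i|$ (here the constant term is $|x_i(\bz, \ldots, \bz)| = |\bz| = 0$ and the multidegree of $x_i$ has a single nonzero entry $k_i = 1$).

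For the inductive step I would invoke (Ax-2) to write any remaining $P$ as $P = P_1 \star P_2$ with $P_1, P_2 \in \ca_n$. The definition of the polynomial function gives $\widetilde P(\vec u) = \widetilde{P_1}(\vec u) \star \widetilde{P_2}(\vec u)$, whence (Ax-3) yields $|P(\vec u)| = |P_1(\vec u)| + |P_2(\vec u)|$ and, specializing to $u_1 = \cdots = u_n = \bz$, also $|P(\bz, \ldots, \bz)| = |P_1(\bz, \ldots, \bz)| + |P_2(\bz, \ldots, \bz)|$. Since the number of occurrences of a given variable is additive over the product, $k_i = |P|_{x_i} = |P_1|_{x_i} + |P_2|_{x_i}$. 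Applying the induction hypothesis to $P_1$ and to $P_2$ and summing the two resulting identities collects the constant terms into $|P(\bz, \ldots, \bz)|$ and the linear terms into $\sum_{i=1}^n k_i |u_i|$, which is exactly the claim.

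The only point requiring care is the well-foundedness and the well-definedness of this induction, and it is precisely here that trees and words differ. For trees, Fact \ref{uniquedecompo} makes the decomposition $P = P_1 \star P_2$ unique, so the recursion is genuine structural induction on the finite tree $P$. For words the decomposition is not unique, but the Note following the definition of the polynomial function guarantees that $\widetilde P(\vec u)$ is independent of the chosen decomposition; I would then render the induction well-founded by inducting on the total number of occurrences in $P$ of symbols from $\S \cup \set{x_1, \ldots, x_n}$ and, whenever $P$ is not a base element, choosing a decomposition into two nonempty factors, each of strictly smaller size. With these observations the argument goes through uniformly, and I expect no further obstacle, since every step reduces to the additivity of $|\cdot|$ furnished by (Ax-3).
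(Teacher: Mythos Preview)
Your proof is correct and is exactly the natural unpacking of the paper's one-line justification (``as a consequence of (Ax-3)''): the paper gives no further argument, and your structural induction on $P$ using the additivity of $|\cdot|$ from (Ax-3) is the intended reading. The care you take with well-foundedness in the word case is appropriate and matches the spirit of the paper's Note on the well-definedness of $\widetilde P$.
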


A necessary condition for a function $f \colon \ca^n \to \ca$ to be
polynomial is that $f$ has in someway a multidegree
$\pair{k_1, \ldots, k_n}$, playing the r\^ole of the multidegree of
polynomials, i.e.,  such that
$|f(u_1, \ldots, u_n)| = |f(\bz, \ldots, \bz)| + \sum_{i = 1}^n k_i.|u_i|.$
For words when $|\S| \geq 3$, the existence of such a multidegree
is proved in \cite{ALUN17}. We  here generalise this proof so that it
also applies to trees  and to smaller alphabets.

\begin{lemma}\label{l:lambdas}
  Let $f \colon \ca(\S)^n \to  \ca(\S)$ be a $n$-ary CP function.

  \medskip
(1) There exist functions $\l, \l_i \colon \N^n \to \N$ such that
$|f(u_1, \ldots, u_n)| = \l(|u_1|, \ldots, |u_n|)$ and
$|f(u_1, \ldots, u_n)|_i = \l_i(|u_1|_{i}, \ldots, |u_n|_{i})$, for
$i = 1, 2$. 

 \medskip
(2) $\l(p_1 + q_1, \ldots, p_n + q_n) = \l_1(p_1, \ldots, p_n)
 + \l_2(q_1, \ldots, q_n)$.
\end{lemma}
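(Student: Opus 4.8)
The plan is to obtain all three functions of part (1) as by-products of the congruence-preservation of $f$ applied to three natural counting congruences, and then to read off the additive identity of part (2) by evaluating $f$ on objects whose letters are restricted to $\s_1$ and $\s_2$.

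First I would record that the relations
\[
u \sim v \iff |u| = |v|, \qquad u \sim_i v \iff |u|_{\s_i} = |v|_{\s_i} \ (i = 1,2)
\]
are congruences on $\pair{\ca, \star, \bz}$. Reflexivity, symmetry and transitivity are clear, and compatibility with $\star$ is exactly the additivity $|u \star v| = |u| + |v|$ of (Ax-3) together with the analogous additivity $|u \star v|_{\s_i} = |u|_{\s_i} + |v|_{\s_i}$ of the letter-counting maps (an occurrence of $\s_i$ in $u \star v$ being an occurrence in $u$ or in $v$). For part (1) I would then invoke Definition \ref{def1:cp}: if $|u_j| = |v_j|$ for all $j$ then $u_j \sim v_j$, hence $f(\vec u) \sim f(\vec v)$, i.e.\ $|f(\vec u)| = |f(\vec v)|$; so $|f(\vec u)|$ depends only on $(|u_1|, \ldots, |u_n|)$, which defines $\lambda$. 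Since $\S$ is nonempty every prescribed length is realised (e.g.\ by a power $\s_1^k$), so $\lambda$ is total on $\N^n$. Replacing $\sim$ by $\sim_i$ and $|\cdot|$ by $|\cdot|_{\s_i}$ yields $\lambda_i$ in precisely the same way.

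For part (2), given $(p_1, q_1), \ldots, (p_n, q_n) \in \N^2$, I would choose for each $j$ an object $u_j$ containing exactly $p_j$ occurrences of $\s_1$, exactly $q_j$ occurrences of $\s_2$ and no other letter (for words, $u_j = \s_1^{p_j}\s_2^{q_j}$; for trees, any tree with the prescribed labelled leaves). Then $|u_j| = p_j + q_j$, $|u_j|_{\s_1} = p_j$ and $|u_j|_{\s_2} = q_j$, so part (1) gives simultaneously
\[
|f(\vec u)| = \lambda(p_1 + q_1, \ldots, p_n + q_n),
\]
and
\[
|f(\vec u)|_{\s_1} = \lambda_1(p_1, \ldots, p_n), \qquad |f(\vec u)|_{\s_2} = \lambda_2(q_1, \ldots, q_n).
\]
The asserted identity then follows from the length accounting $|f(\vec u)| = |f(\vec u)|_{\s_1} + |f(\vec u)|_{\s_2}$.

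The main obstacle is exactly this last accounting, namely that $f(\vec u)$ contains no letter outside $\set{\s_1, \s_2}$ when the inputs do not. I would control this by applying the CP property to the congruence $\sim_k$ for each further letter $\s_k$: on the bichromatic inputs above $|u_j|_{\s_k} = 0$, so $|f(\vec u)|_{\s_k}$ equals the constant $|f(\bz, \ldots, \bz)|_{\s_k}$. Hence the extra letters in $f(\vec u)$ can only be those already present in $f(\bz, \ldots, \bz)$, and the needed equality $|f(\vec u)| = |f(\vec u)|_{\s_1} + |f(\vec u)|_{\s_2}$ holds whenever the value of $f$ on the empty arguments is itself bichromatic. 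In the setting relevant to this paper, where $\S = \set{\s_1, \s_2}$, this is automatic since $|t| = |t|_{\s_1} + |t|_{\s_2}$ for every object $t$, and part (2) drops out at once.
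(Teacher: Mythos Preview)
Your argument follows the same skeleton as the paper's: exhibit counting congruences, pull the three $\lambda$'s out of the CP property, and read off (2) from a single evaluation on objects with prescribed letter counts. The one substantive difference is your choice of $|u|_2$. You take $|u|_2 = |u|_{\s_2}$, which forces you to worry about whether $f(\vec u)$ can contain letters outside $\{\s_1,\s_2\}$; you then have to restrict to $|\S|=2$ to close the accounting $|f(\vec u)| = |f(\vec u)|_{\s_1} + |f(\vec u)|_{\s_2}$. The paper instead sets $|u|_1 = |u|_a$ and $|u|_2 = |u| - |u|_1$ (the count of \emph{all} letters other than $a$). With that definition the identity $|w| = |w|_1 + |w|_2$ holds for every object $w$ by construction, so the ``main obstacle'' you flag simply does not arise, and (2) follows for every alphabet with $|\S|\ge 2$ without any side condition on $f(\bz,\ldots,\bz)$. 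Your proof is correct as far as it goes, but the paper's choice of $|u|_2$ is the cleaner one and is what makes the lemma available in the generality in which Proposition~\ref{ALUN} uses it.
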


\begin{proof}For an object $u \in \ca$, denote by $|u|_1 = |u|_{a}$
the number of occurrences of the letter $a$ in $u$, and let
$|u|_2 = |u| - |u|_1$. Formally, ${\wid \ve }_1 = 0$,
$\wid {a}_1 = 1$, $\wid {\s}_1 = 0$ for $\s\neq a$, and
${\wid {t\star t'}}_1 = {\wid {t}}_1 +  {\wid {t'}}_1 $.\medskip

(1) As the relation $|u| = |v|$ is a congruence and $f$ is CP,
$|u_i| = |v_i|$ for $i = 1, \ldots, n$ implies
$|f(u_1, \ldots, u_n)| = |f(v_1, \ldots, v_n)|$ hence
$|f(u_1, \ldots, u_n)|$ depends only on the lengths
$|u_1|, \ldots, |u_n|$, and $\l$ is well defined. Similarly for $\l_i$,
$i = 1, 2$ as $|u|_i = |v|_i$ is also a congruence.\medskip

(2) Consider objects $u_i$ with $|u_i|_1 = p_i$ and $|u_i|_2 = q_i$
(see Figure \ref{dessinArbu_1u_2}). On the one hand,
$|f(u_1, \ldots, u_n)| = \l(|u_1|, \ldots, |u_n|)
 = \l(p_1+q_1,\ldots,p_n+q_n)$,
$|f(u_1, \ldots, u_n)|_1 = \l_1(p_1, \ldots, p_n)$ and
$|f(u_1, \ldots, u_n)|_2 = \l_2(q_1, \ldots, q_n)$. On the other
hand, $|f(u_1, \ldots, u_n)| = |f(u_1, \ldots, u_n)|_1
 + |f(u_1, \ldots, u_n)|_2$, hence (2).
\end{proof}

\begin{figure}[!h]
\vspace*{-2mm}
\setlength{\unitlength}{1.5\unitlength}
\begin{center}
\begin{picture}(100,80)
  \put(0,60){\line(2,1){20}}
  \put(20,50){\line(2,1){20}}
  \put(40,40){\line(2,1){20}}
  \put(60,30){\line(2,1){20}}
  \put(60,10){\line(2,1){20}}
  \put(80,20){\line(2,1){20}}

  \put(20,70){\line(2,-1){100}}
  \put(80,20){\line(2,-1){20}}
  \put(60,10){\line(2,-1){20}}

   \put(-5,55){$a$}
   \put(15,45){$c$}
   \put(35,35){$a$}
   \put(55,25){$c$}

   \put(75,-5){$b$}
   \put(95,5){$a$}
   \put(115,15){$c$}
\end{picture}
\end{center}
\caption{A tree $u_i$ with $p_i = |u_i|_1 = 3$ and
 $q_i = |u_i|_2 = 4$. }
\label{dessinArbu_1u_2}\vspace*{-2mm}
\end{figure}
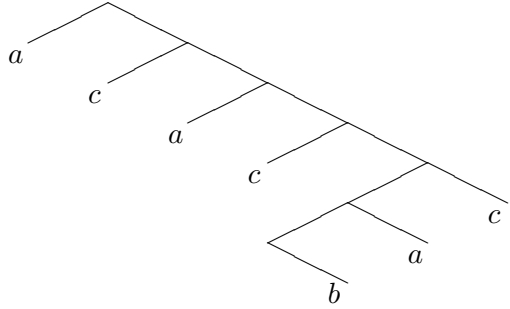

\begin{proposition}\label{ALUN}
For any $n$-ary CP function $f \colon \ca(\S)^n \to  \ca(\S)$, with
$|\S| \geq 2$, there exists a $n$-tuple $\pair{k_1, \ldots, k_n}$ of
natural numbers, called the {\em multidegree} of $f$, such that
$|f(u_1, \ldots, u_n)| = |f(\bz, \ldots, \bz)| + \sum_{i = 1}^n k_i.|u_i|$.
\end{proposition}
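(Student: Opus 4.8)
The plan is to extract from Lemma~\ref{l:lambdas} the single functional equation governing $\l$ and to show it forces $\l$ to be affine. Write $\vec 0 = (0, \ldots, 0)$ for the all-zero tuple. I would first specialise the relation (2) in two complementary ways. Setting every $q_i = 0$ gives $\l(p_1, \ldots, p_n) = \l_1(p_1, \ldots, p_n) + \l_2(\vec 0)$, and setting every $p_i = 0$ gives $\l(q_1, \ldots, q_n) = \l_1(\vec 0) + \l_2(q_1, \ldots, q_n)$. Thus each of $\l_1, \l_2$ coincides with $\l$ up to an additive constant: $\l_1 = \l - \l_2(\vec 0)$ and $\l_2 = \l - \l_1(\vec 0)$ as functions on $\N^n$.

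Substituting these two identities back into (2), and writing $c = \l(\vec 0)$ (which, by taking $p = q = \vec 0$ in (2), satisfies $c = \l_1(\vec 0) + \l_2(\vec 0)$), the two constants cancel and I obtain, for all $\vec p, \vec q \in \N^n$, the relation $\l(\vec p + \vec q) = \l(\vec p) + \l(\vec q) - c$. Hence the shifted function $g := \l - c$ satisfies the Cauchy equation $g(\vec p + \vec q) = g(\vec p) + g(\vec q)$, with $g(\vec 0) = 0$. Since $\N^n$ is the free commutative monoid on $n$ generators, additivity iterated along each coordinate yields $g(p_1, \ldots, p_n) = \sum_{i = 1}^n k_i\, p_i$, where $k_i$ is the value of $g$ on the $i$-th unit tuple $(0, \ldots, 1, \ldots, 0)$.

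It then remains to verify $k_i \in \N$. Integrality is immediate, because $\l$, and hence $g$, is $\N$-valued so each $k_i \in \Z$. Non-negativity follows from boundedness below: since lengths are non-negative we have $\l \geq 0$, so $g \geq -c$, whence $m\, k_i = g(0, \ldots, m, \ldots, 0) \geq -c$ for every $m \in \N$; this is impossible if $k_i < 0$, so $k_i \geq 0$. Finally I would translate back through Lemma~\ref{l:lambdas}(1): for arbitrary objects $u_1, \ldots, u_n$, $|f(u_1, \ldots, u_n)| = \l(|u_1|, \ldots, |u_n|) = c + \sum_{i = 1}^n k_i\, |u_i|$, and taking all $u_i = \bz$ identifies $c = \l(\vec 0) = |f(\bz, \ldots, \bz)|$, giving the claimed identity.

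The substantive content all sits in Lemma~\ref{l:lambdas}; given it, the work here is the elementary but essential reduction to the Cauchy equation together with the boundedness argument that keeps the coefficients inside $\N$, which I expect to be the only delicate point. The hypothesis $|\S| \geq 2$ is used not in this argument but upstream, inside the Lemma, to guarantee that objects with independently prescribed values of $|u|_1$ and $|u|_2$ exist, so that relation (2) indeed holds for \emph{all} $\vec p, \vec q \in \N^n$.
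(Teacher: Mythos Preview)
Your proof is correct and takes essentially the same approach as the paper: both start from Lemma~\ref{l:lambdas}(2) and extract that the coordinate increments of $\l$ are constant, you via an explicit reduction to the Cauchy equation $g(\vec p+\vec q)=g(\vec p)+g(\vec q)$, the paper via a direct telescoping computation of $\l(\ldots,m_i+1,\ldots)-\l(\ldots,m_i,\ldots)=\l_2(\vec e_i)-\l_2(\vec 0)$. Your version is in fact slightly more careful, since you explicitly argue $k_i\geq 0$ from the lower bound $\l\geq 0$, a point the paper leaves implicit.
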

\begin{proof}
Let
$\vec{e_i} = \pair{\overbrace{0, \dots, 0}^{\text{$(i - 1)$ times}}, 1, 0, \ldots, 0}$,
$\vec{0}=\pair{0,\ldots,0}$, and apply Lemma \ref{l:lambdas}. We
have for any $m_1, \ldots, m_i,\allowbreak \ldots, m_n$, \vspace*{-2mm}
\begin{align*}
\l(m_1, \ldots, m_{i} + 1, \ldots, m_n)
 = \l_1(m_1, \ldots, m_i, \ldots, m_n) &+ \l_2(\vec{e_i}),\\
\l(m_1, \ldots, m_{i}, \ldots, m_n)
 = \l_1(m_1, \ldots, m_i, \ldots, m_n) &+ \l_2(\vec 0).
\end{align*}
Subtracting the second line from the first one:
$$\l(m_1, \ldots, m_{i} + 1, \ldots, m_n)
 - \l(m_1, \ldots, m_i, \ldots, m_n) = \l_2(\vec{e_i}) - \l_2(\vec 0).$$
\noindent Setting  $k_i = \l_2(\vec{e_i}) - \l_2(\vec 0)$,
 we get:
 \begin{align*}
\l(m_1, \ldots, m_{i}, \ldots, m_n)
 - \l(m_1, \ldots, m_i - 1, \ldots, m_n) &= k_i\\
&\ \ \vdots\\
\l(m_1, \ldots, 1, \ldots, m_n)
 - \l(m_1, \ldots, 0, \ldots, m_n) &= k_i\\
\text{Summing up the $m_i$ lines}\qquad\quad\qquad
\l(m_1, \ldots, m_{i}, \ldots, m_n)
 - \l(m_1, \ldots, 0, \ldots, m_n) &= k_i m_i\\
\text{\ Iterating for all  $i$ we get},\qquad\quad\qquad\qquad\qquad
\l(m_1, \ldots, m_n) - \l(\vec 0) = k_1m_1 + \cdots
 +& k_nm_n.
\end{align*}
Hence the result.
\end{proof}
Proposition \ref{ALUN} holds both for words and trees. However,
for trees the following better result holds even when $|\S| = 1$.

\begin{proposition}\label{ALUNone}
In the algebra of trees, for any $n$-ary CP function
$f \colon \ca(\S)^n \to  \ca(\S)$, there exists a $n$-tuple
$\pair{k_1, \ldots, k_n}$ of natural numbers, called the
{\em multi\-de\-gree} of $f$, such that
$|f(u_1, \ldots , u_n)| = |f(\bz, \ldots, \bz)| + \sum_{i = 1}^n k_i.|u_i|$.
\end{proposition}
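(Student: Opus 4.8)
The plan is to follow the skeleton of Proposition \ref{ALUN}: first show that $|f(u_1, \ldots, u_n)|$ depends only on the lengths $|u_1|, \ldots, |u_n|$, which defines a function $\l \colon \N^n \to \N$ with $|f(u_1,\ldots,u_n)| = \l(|u_1|,\ldots,|u_n|)$, and then prove that $\l$ is affine, i.e.\ $\l(m_1,\ldots,m_n) = \l(\vec 0) + \sum_i k_i m_i$. The first step is exactly part~(1) of Lemma \ref{l:lambdas}, whose proof uses only the congruence ``$|u| = |v|$''. Since this congruence is available for any alphabet, $\l$ is well defined even when $|\S| = 1$. So the entire content of the proposition is the affineness of $\l$.

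The difficulty is that the affineness argument of Proposition \ref{ALUN} splits the length as $|u| = |u|_a + (|u| - |u|_a)$ and uses that both summands are individually tracked by congruences (the kernels of the two letter-counting homomorphisms), together with part~(2) of Lemma \ref{l:lambdas}. This split is genuinely unavailable when $\S = \set{a}$: because $\bz \star \bz = \bz$, the only homomorphisms from the tree algebra to $\langle \N, +\rangle$ are the multiples of the length, so there is no second linear invariant to play the r\^ole of the second letter. Hence part~(2) cannot be reused, and the linear congruences alone cannot force affineness; indeed they merely express that $\l$ is congruence preserving on $\langle \N, +\rangle$, and $\langle \N, +\rangle$ is not affine complete.

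I would therefore feed into the CP hypothesis the shape-distinguishing congruences that trees possess but $\langle \N, +\rangle$ does not, exploiting the unicity of decomposition (Fact \ref{uniquedecompo}). Concretely, I would use substitution congruences $\sim_{u,v}$ that replace occurrences of a subtree $u$ by a shorter subtree $v$, each such replacement changing the length by exactly $|u| - |v|$. Applying CP to a pair $t \sim_{u,v} t'$ forces $f(t)$ and $f(t')$ into a common class, which constrains $|f(t)| - |f(t')|$ in terms of $|u|-|v|$. By choosing $u, v$ and the ambient trees so that the relevant substitutions are controlled, thanks to unique decomposition, I would convert this into the statement that the discrete derivative $\l(m_1,\ldots,m_i+1,\ldots,m_n) - \l(m_1,\ldots,m_i,\ldots,m_n)$ is a constant $k_i$, independent of the other coordinates, and then telescope over each coordinate exactly as in Proposition \ref{ALUN} to obtain $\l(m_1,\ldots,m_n) = \l(\vec 0) + \sum_i k_i m_i$.

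The step I expect to be the main obstacle is precisely this last extraction: obtaining an \emph{exact} linear identity rather than a merely modular one. A single substitution congruence typically yields only a constraint of the shape $\l(\ldots,m_i+1,\ldots) - \l(\ldots,m_i,\ldots) \equiv 0 \pmod{d}$, which, as the factorial example for $\langle \N, +\rangle$ shows, is far too weak. The crux is to pin down the number of admissible substitutions so tightly, using canonical shortest representatives of the congruence classes, that these modular constraints are upgraded to a single exact value $k_i$. This is exactly where the non-associativity of $\star$ and the unicity of decomposition must do the work that the second letter did in the case $|\S| \ge 2$.
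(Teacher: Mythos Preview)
Your proposal is a plan, not a proof: you explicitly flag the extraction of an \emph{exact} linear identity from the substitution congruences $\sim_{u,v}$ as ``the main obstacle'' and do not carry it out. That step is genuinely difficult along the route you sketch, since a single $\sim_{u,v}$ yields only a congruence of the form $|f(t)| \equiv |f(t')| \pmod{|u|-|v|}$, and it is not clear how to assemble such modular constraints into the exact value of $k_i$; this is precisely the obstruction you yourself identify with the factorial example on $\langle \N,+\rangle$.

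The paper sidesteps this entirely by observing that trees, even over a one-letter alphabet, \emph{do} carry a second additive invariant splitting the length: for $u\notin\S$, let $|u|_1$ (resp.\ $|u|_2$) be the number of \emph{left} (resp.\ \emph{right}) leaves of $u$, so that $|u|=|u|_1+|u|_2$. The relations ``$u\sim_j v$ iff either $u=v\in\S$, or $u,v\notin\S$ and $|u|_j=|v|_j$'' are congruences on the tree algebra (this is exactly where non-associativity and unique decomposition do their work, as you anticipated). After the harmless trick of replacing $f$ by $\bz\star f$ to force values out of $\S$, the pair $|\cdot|_1,|\cdot|_2$ plays precisely the r\^ole that the two letter-counts play in Lemma~\ref{l:lambdas}(2), and the telescoping argument of Proposition~\ref{ALUN} goes through unchanged. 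Your assertion that ``there is no second linear invariant'' is correct for \emph{homomorphisms} to $\langle\N,+\rangle$, but the paper's $\sim_j$ are not kernels of such maps (they isolate each letter as a singleton class); this is why the invariant was easy to overlook. So the missing idea is not a delicate analysis of canonical representatives for $\sim_{u,v}$, but simply a different, much tamer pair of congruences.
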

\begin{proof}
For a tree $u \notin \S$,  $|u|_1$ (resp.  $|u|_2$) is the number
of  left (resp. right) leaves, so that $|u| = |u|_1 + |u|_2$ for
$u \notin \S$.
On Figure \ref{dessinArbu_1u_2} $|u_i|_1 = 4$ and
$|u_i|_2 = 3$.
Formally,  $|\bz| = |\bz|_1 = |\bz|_2 =0$.
For $u = t \star t' \notin \S$ we have\smallskip

$|u|_1 =  |t'|_1 + \left\{
  \begin{array}{ll}
     1& \mbox{if $t \in \S$},\\
     |t|_1& \mbox{if $t \notin \S$}.
  \end{array}
\right.$
and
$|u|_2 = |t|_2 + \left\{
  \begin{array}{ll}
  1& \mbox{if $t' \in \S$},\\
  |t'|_2& \mbox{if $t' \notin \S$}.
  \end{array}
\right.$

\medskip
We already know that the  relation $\sim$ defined by $u \sim v$
iff $|u| = |v|$ is a congruence.
For $j=1, 2$, the relation $\sim_j $ defined by
$u \sim_j v$ iff either $u = v \in \S$ or $u, v \notin \S$ and
$|u|_j = |v|_j$ is a congruence.
Hence if $f=\ca^n\to \ca$ is CP then for all
$u_1, \ldots, u_n, v_1, \ldots, v_n \notin \S$ such that
$\forall i = 1, \ldots, n,  |u_i|_j = |v_i|_j$ and
$f(u_1, \ldots, u_n), f(v_1, \ldots, v_n) \notin \S$, we have
$|f(u_1, \ldots, u_n)|_j = |f(v_1, \ldots, v_n)|_j$.
Without loss of generality, we may assume that for all
$u_1, \ldots,\allowbreak u_n$, $f(u_1, \ldots, u_n) $ is not in $\S$. This
holds because $g(u_1, \ldots, u_n) = \bz \star f(u_1, \ldots, u_n)$
is CP and   $|g(u_1, \ldots, u_n)| = |f(u_1, \ldots, u_n)|$.

For $u \notin \S$, we have $|u| = |u|_{1} + |u|_2$.
Exactly as in Proposition \ref{ALUN} we show that  for any
$m_1, \ldots, m_i, \ldots,\allowbreak m_n$,
$\l(m_1, \ldots, m_n) - \l(\vec 0) = k_1m_1 + \cdots + k_nm_n.$
It follows that for all $u_1, \ldots, u_n \notin \S$,
$|f(u_1, \ldots, u_n)| = |f(\bz, \ldots, \bz)| + \sum_{i = 1}^n k_i.|u_i|$.

\medskip
Finally, as for all $u \in \ca$, $u \star \bz \notin \S$ and
$|u \star \bz| = |u|$, we have:
$|f(u_1, \ldots, u_n)| = |f(u_1 \star \bz, \ldots, u_n \star \bz)|\allowbreak =
|f(\bz, \ldots, \bz)| + \sum_{i = 1}^n k_i.|u_i \star \bz|
 = |f(\bz, \ldots, \bz)| + \sum_{i = 1}^n  k_i.|u_i|$.
\end{proof}
\section{The toolbox}\label{sec:toolbox}

\subsection{Congruent substitutions}
\label{sec:congr-subst}

If $f$ is CP then $f(u) \sim f(v)$ as soon as $u \sim v$. This is why
we introduce specific congruences $\sim_{u, v}$ such that
$u\sim_{u, v} v$, so that if for some polynomial $Q$, (which is also
CP), we know that for some $u$, $f(u) = Q(u)$, then we know that
for all $v$, $f(v) \sim_{u, v} Q(v)$. Thus it  is important to describe
the congruence classes of such congruences.
\begin{definition}\label{def:usimv}
For ${u, v}$ a couple of objects in $\ca$ the relation $\sim_{u, v}$
is the equivalence relation generated by the set of pairs
$\set{\pair{P(u), P(v)} \mid P \in \ca_{1, 1}}$, i.e., $\sim_{u, v}$ is the least reflexive, symmetric and transitive relation containing all pairs ${\pair{P(u), P(v)}}$ with $P \in \ca_{1, 1}$.
$\sim_{u, v}$ is clearly a congruence on $\pair{\ca, \star,\bz}$.
\end{definition}
Given  such a congruence, we can consider the quotient algebra. It
may happen that each congruence class has a  simple canonical
representative.
For instance, the canonical representative could be the shortest
object in the congruence class, provided it is unique. However
unicity of the shortest representative certainly does not hold for the
congruences $\sim_{u, v}$ when $|u| = |v|$. It also happens that
unicity does not hold even when $|u| > |v|$ (Remark
\ref{rem:nonUnique}).
\begin{remark}\label{rem:nonUnique}
Even if $|u| > |v|$, there might be several  shortest congruent
elements. For instance in the case of words,
$ab\sim_{aa,b}aaa \sim_{aa,b} ba$, hence $ab$ and $ba$ are two
shortest elements congruent to $aaa$.
\end{remark}

\begin{definition}\label{def:reducible2}
For a given element $\tau$ of $\ca$, an element $t \in \ca$ is
$\tau$-{\em reducible}, if $\tau$ is a {\em sub-object} of $t$.
We denote by $\T_\tau$ the set of all $\tau$-irreducible objects in
$\ca$.
\end{definition}
In Figure \ref{fig:polirr}, $Q_\tau$ is $\tau$-reducible, $Q$ and
$P_\tau$  are $\tau$-irreducible, and in Figure \ref{fig:fak}, $t''$ is
$\tau$-irreducible.

\begin{figure}[!h]
\centering
\newcommand{\Tb}{\fut{\leaf {$b$}}{\leaf {$x_1$}}}
\newcommand{\Td}{\fut{\leaf {$x$}}{\leaf {$d$}}}
\newcommand{\TQ}{\fut{\lb{\leaf {$c$}}}{\Td}}
\newcommand{\TQp}{\Fut{\Tb}{\Ttau}}
\newcommand{\TP}{\fut{\Tb}{\leaf{$x_2$}}}

\begin{picture}(225,50)
\put(0,34){$\tau\!=$}\put(20,40){\Ttau}
\put(47,34){$Q\!=$} \put(70,40){\TQ}
\put(108,37){$Q_\tau\!=$}\put(140,40){\TQp}\put(140,40){\circle*{2}}
\put(182,35){$P_\tau\!=$}  \put(210,40){\TP}
\end{picture}

\caption{From left to right: tree $\tau = c \star d$, a
$\tau$-irreducible polynomial $Q$ with variable $x$, a
$\tau$-reducible polynomial $Q_\tau$ with variable $x_1$ together
with its associated $\tau$-irreducible polynomial
$P_\tau = Red^*_{\tau, x_2}(Q_\tau)$.}
\label{fig:polirr}\vspace*{-1mm}
\end{figure}
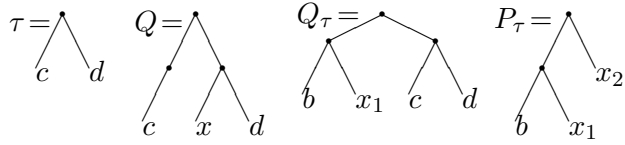

We now extend Definition \ref{def:reducible2} of $\tau$-irreducible
objects in $\ca$ to polynomials in $\ca_n$.
\begin{definition}\label{def:reducibleP}
Let $\tau \in \ca$. A polynomial  $P\in \ca_n$ is said to be
{\em $\tau$-irreducible} if any sub-object $v$ of $P$ which is in
$\ca$ is $\tau$-irreducible.
\end{definition}
Intuitively, the constant sub-objects (``coefficients'') of $P$ are
$\tau$-irreducible. In Figure \ref{fig:polirr}, $Q_\tau$ is the only
$\tau$-reducible polynomial.
\subsection{Canonical representatives}
\label{sec:canon-repr}
In fact it is possible to define and to ``compute'' a canonical
representative $t'$ of $t$ for $\sim_{\tau, v}$ if  $|\tau| > |v|$. To
this end we stepwise replace every occurrence of $\tau$ inside $t$
by $v$. To make this process deterministic we define the
{\it reduct} $Red_{\tau, v}(t)$ obtained by replacing by $v$ the
``leftmost'' occurrence of $\tau$ inside a $\tau$-reducible object
$t$.

\begin{definition}\label{def:RedTree}(Definition of $Red_{\tau, v}(t)$.) \smallskip \\
{\bf Case of trees} If $t = \tau$ then $ Red_{\tau, v}(t) = v$.
Oherwise, since $t \neq \tau$ is $\tau$-reducible,
$|t| > |\tau| \geq 1$, hence, by (Ax-2), $t = t_1 \star t_2$, and at
least one $t_i$ is $\tau$-reducible. Either $t_1 \in \ca$ is
$\tau$-reducible, and then
$Red_{\tau, v}(t) = Red_{\tau, v}(t_1) \star t_2$, or $t_1$ is
$\tau$-irreducible, then $t_2$ is $\tau$-reducible and
$Red_{\tau, v}(t) = t_1 \star Red_{\tau, v} (t_2)$.
Figure \ref{fig:fak} illustrates this reduction process.\smallskip

\noindent {\bf Case of words} Since $\tau$ is a factor of $t$, there
exists a shortest prefix $t'$ of $t$ such that $t = t'\tau t''$. Then
$Red_{\tau, v}(t) = t'vt''$.
\end{definition}
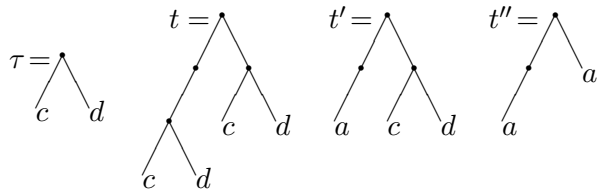
\begin{figure}[!h]\vspace*{-6mm}
\centering
\def\tdeux{\fut{\lb{\Ttau}}{\Ttau}}
\def\ttrois{\fut{\lb{\leaf{$a$}}}{\Ttau}}
\def\tquatre{\fut{\lb{\leaf{$a$}}}{\leaf{$a$}}}

\begin{picture}(210,70)
\put(0,40){$\tau\!=$}  \put(20,45){\Ttau}
\put(60,55){$t=$}     \put(80,60){\tdeux}
\put(120,55){$t'\!=$}  \put(142,60){\ttrois}
\put(180,55){$t''\!=$} \put(205,60){\tquatre}
\end{picture}
\caption{From left to right, $\tau = c \star d$,
$t = ((c \star d) \star \bz) \star (c \star d)$,
$t' = (a \star \bz) \star (c \star d) = Red_{\tau, a}(t)$,
$t'' = Red_{\tau, a}(t') = (a \star \bz) \star a$.}
\label{fig:fak}
\end{figure}
We iterate this partial reduction function to get a mapping
$Red^*_{\tau, v} \colon \ca \to \T_\tau$ inductively defined by:
$$Red_{\tau,v}^*(t)=\left\{
  \begin{array}{ll}
    t & \mbox{if $t\in \T_\tau$}\\
    Red_{\tau,v}^*(Red_{\tau,v}(t))&\mbox{if $t\notin \T_\tau$}.
  \end{array}\right.$$
\begin{proposition}\label{prop:red*G}
$Red^*_{\tau, v}(u \star w)
 = Red^*_{\tau, v} (Red^*_{\tau, v}(u) \star w).$
\end{proposition}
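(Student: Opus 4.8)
The plan is to isolate a single \emph{locality} statement -- that the leftmost reduction step operates inside $u$ as long as $u$ is $\tau$-reducible -- and then to finish by a short induction on the length $|u|$. To lighten notation write $R=Red_{\tau,v}$ and $R^{*}=Red^{*}_{\tau,v}$, and recall that throughout this subsection $|\tau|>|v|$, so each application of $R$ strictly decreases the length and $R^{*}$ is well defined and terminating. The key lemma I aim to prove is: \emph{if $u$ is $\tau$-reducible, then $R(u\star w)=R(u)\star w$.}

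Granting this lemma, the proposition follows by induction on $|u|$. If $u\in\T_\tau$, then $R^{*}(u)=u$ and both sides of the claimed identity equal $R^{*}(u\star w)$, so there is nothing to prove. If $u\notin\T_\tau$, then $\tau$ is a sub-object of $u$; since $y\star w\in\ca_{1,1}$ shows $u$ to be a sub-object of $u\star w$, transitivity of the sub-object relation gives $u\star w\notin\T_\tau$ as well. The lemma yields $R(u\star w)=R(u)\star w$, whence
\[
R^{*}(u\star w)=R^{*}\!\bigl(R(u\star w)\bigr)=R^{*}\!\bigl(R(u)\star w\bigr).
\]
As $|R(u)|<|u|$, the induction hypothesis applies to $R(u)$ and gives $R^{*}(R(u)\star w)=R^{*}(R^{*}(R(u))\star w)$, while $R^{*}(R(u))=R^{*}(u)$ by the defining recursion of $R^{*}$ (valid because $u\notin\T_\tau$). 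Combining these equalities produces $R^{*}(u\star w)=R^{*}(R^{*}(u)\star w)$, as desired.

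It remains to prove the locality lemma, and this is exactly where the tree and word cases diverge; I expect the word case to be the only real obstacle. For trees it is essentially immediate from Definition \ref{def:RedTree}: if $u$ is $\tau$-reducible then $u\star w\neq\tau$, since otherwise $\tau$ would be a subtree of its proper subtree $u$, contradicting the lengths. By Fact \ref{uniquedecompo} the decomposition of $u\star w$ is $\pair{u,w}$, and as $u=t_1$ is reducible the first branch of the definition applies, giving $R(u\star w)=R(u)\star w$ directly. For words the difficulty is that a factor occurrence of $\tau$ in $u\star w$ may straddle the boundary between $u$ and $w$, so that the leftmost occurrence might a priori not lie inside $u$. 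This is resolved by a positional argument: writing $u=u'\tau u''$ with $u'$ the shortest such prefix, one has $|u'|\le |u|-|\tau|$, so this occurrence sits entirely inside $u$; moreover any occurrence of $\tau$ in $u\star w$ starting before position $|u'|$ would lie inside $u$ and contradict the minimality of $u'$, while any straddling occurrence starts at a position exceeding $|u|-|\tau|\ge|u'|$. Hence the leftmost occurrence of $\tau$ in $u\star w$ is precisely the one selected inside $u$, so replacing it by $v$ gives $R(u\star w)=R(u)\star w$; the degenerate case $w=\bz$, $u=\tau$ is checked by hand.
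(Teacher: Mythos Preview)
Your proof is correct and follows essentially the same approach as the paper: both rely on the locality fact that $Red_{\tau,v}(u\star w)=Red_{\tau,v}(u)\star w$ whenever $u$ is $\tau$-reducible, and then pass from a single step to the full reduction by a short induction (the paper inducts on the iteration index $i$, you on $|u|$). The paper leaves the locality step entirely implicit, so your explicit isolation and verification of it---for both trees and words---is if anything an improvement; one small quibble is that in the tree case the contradiction you want when $u\star w=\tau$ is in terms of the node count $s(\cdot)$ rather than the leaf count $|\cdot|$, since $|\cdot|$ does not strictly decrease on proper subtrees.
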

\begin{proof}
By definition, $Red_{\tau, v}^*(t) = Red_{\tau, v}^k(t)$, where $k$ is
the least integer such that $Red_{\tau, v}^k(t)$ is $\tau$-irreducible.
If $Red_{\tau, v}^*(u \star w) = Red_{\tau, v}^p(u \star w)$ and
$Red_{\tau, v}^*(u) = Red_{u, v}^q(u)$, necessarily $q\leq p$
and we have by induction on $i = 0, \ldots, q$,
$Red^{p}_{\tau, v}(u \star w) =
 Red^{p-i}_{\tau, v}(Red^i_{\tau, v}(u) \star w)$
hence the result for $i = q$.
\end{proof}
Although $Red^*_{\tau, v}(t)$ is a canonical representative of the
congruence class of $t$ modulo $\sim_{\tau, v}$, it is not
necessarily the only object of the equivalence class of $t$ having
minimal length, as shown in Remark~ref{rem:nonUnique}.

To prevent such situations, we will first  define for each algebra
a suitably chosen subset $\ct$ of the algebra  ensuring that for
each $\tau \in \ct$, there exists a unique canonical
representative of shortest length in the class of $\sim _{\tau, v}$ for
each $v \in \ca$ such that $|v| < |\tau|$ (Proposition
\ref{prop:sim}).
This set $\ct$ has to satisfy the following assumption.

\begin{assumption} \label{f:exUniCanRep}
$\forall  \tau \in \ct,\  v \in \ca,\  P \in \ca_{1, 1}$,\ \
$Red_{\tau, v}^*(P(\tau)) = Red_{\tau, v}^*(P(v)).$
\end{assumption}

 Proposition   \ref{ass1.t} (resp. \ref{ass1.w}) shows that this assumption holds for the set $\ct$ of trees defined by
\eqref{tau:tree} in Section \ref{sec:tree} (resp.  the set $\ct$ of words defined by
\eqref{tau:word} in Section \ref{sec:word}).

Provided the truth of this assumption, we get:

\begin{proposition}\label{prop:sim}(Existence of a canonical
representative)
Let $\tau\in\ct$, and $v \in \ca$ with $|\tau| > |v|$. For any
$t, t' \in \ca$,
$t \sim _{\tau, v} t' \ \text{ iff } \ Red_{\tau, v}^*(t)
 = Red_{\tau, v}^*(t').$
\end{proposition}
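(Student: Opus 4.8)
The plan is to prove the two implications of the biconditional separately, with the forward direction (the "if" reading $t \sim_{\tau, v} t' \Rightarrow Red^*_{\tau, v}(t) = Red^*_{\tau, v}(t')$) being the real content and the backward direction being essentially immediate. For the backward direction, suppose $Red^*_{\tau, v}(t) = Red^*_{\tau, v}(t')$. Since each single reduction step $Red_{\tau, v}$ replaces an occurrence of $\tau$ inside an object by $v$, that single step has the form $P(\tau) \mapsto P(v)$ for some $P \in \ca_{1,1}$ capturing the surrounding context; hence $t \sim_{\tau, v} Red_{\tau, v}(t)$ by Definition \ref{def:usimv}, and iterating, $t \sim_{\tau, v} Red^*_{\tau, v}(t)$ and likewise $t' \sim_{\tau, v} Red^*_{\tau, v}(t')$. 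Transitivity then gives $t \sim_{\tau, v} t'$.

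For the forward direction I would argue that the map $t \mapsto Red^*_{\tau, v}(t)$ is constant on $\sim_{\tau, v}$-classes. Since $\sim_{\tau, v}$ is the equivalence relation \emph{generated} by the pairs $\langle P(\tau), P(v)\rangle$, it suffices to check that $Red^*$ agrees on each such generating pair, i.e. that
\begin{equation}\label{eq:generator}
Red^*_{\tau, v}(P(\tau)) = Red^*_{\tau, v}(P(v)) \qquad \text{for all } P \in \ca_{1,1};
\end{equation}
the generating pairs span all of $\sim_{\tau, v}$ by reflexive-symmetric-transitive closure, and since equality of $Red^*$-values is itself a reflexive, symmetric, transitive relation, agreement on the generators propagates to all related pairs. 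But \eqref{eq:generator} is \emph{exactly} Assumption \ref{f:exUniCanRep}, whose truth we are permitted to assume (it will be verified for the specific sets $\ct$ in Sections \ref{sec:tree} and \ref{sec:word}). This is the crux: the whole weight of the proposition is shifted onto Assumption \ref{f:exUniCanRep}, and the role of $\tau \in \ct$ and $|\tau| > |v|$ is precisely to guarantee that assumption holds.

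Two subsidiary facts are needed to make the above rigorous. First, I must confirm that $Red^*_{\tau, v}$ is well defined and terminating: each application of $Red_{\tau, v}$ strictly decreases length (it replaces $\tau$ by the shorter $v$, using $|\tau| > |v|$), so the iteration halts at a $\tau$-irreducible object after finitely many steps, and $Red^*_{\tau, v}(t) \in \T_\tau$. Second, and more delicately, $Red_{\tau, v}$ itself is only a \emph{partial} reduction that picks the leftmost occurrence, so I should be careful that \eqref{eq:generator} really does cover an arbitrary context $P$: the subtlety is that the context $P$ surrounding the rewritten occurrence need not be $\tau$-irreducible, so reducing $P(\tau)$ may first eliminate occurrences of $\tau$ lying inside $P$ before reaching the distinguished one. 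The main obstacle is thus entirely contained in Assumption \ref{f:exUniCanRep}, which asserts confluence of the reduction in the guise ``reducing the pattern away first, versus substituting $v$ first, yields the same normal form'', and Proposition \ref{prop:red*G} is the tool that lets one commute reductions in subobjects. Granting Assumption \ref{f:exUniCanRep}, the remainder is the short closure argument sketched above.
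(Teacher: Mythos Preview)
Your proof is correct and follows essentially the same approach as the paper: the backward direction uses that each reduction step relates $P(\tau)$ to $P(v)$ and hence $t\sim_{\tau,v}Red^*_{\tau,v}(t)$, while the forward direction reduces to Assumption~\ref{f:exUniCanRep} via the observation that equality of $Red^*_{\tau,v}$-values is an equivalence relation containing the generating pairs of $\sim_{\tau,v}$. The paper phrases the forward direction slightly more concretely by writing out the chain $t=t_1,\ldots,t_n=t'$ with $t_i=P_i(\tau)$, $t_{i+1}=P_i(v)$ (or vice versa) and applying the assumption to each consecutive pair, but this is the same argument.
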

\begin{proof}
By the definition of $Red^*_{\tau, v}$, for all $t,t'$,
$t \sim_{\tau, v}Red^*_{\tau, v}(t)$, and
$t' \sim_{\tau, v} Red_{\tau, v}^*(t')$. Hence
$Red_{\tau, v}^*(t) = Red_{\tau, v}^*(t')$ implies $t\sim _{\tau, v} t'$
by transitivity.

\medskip
Conversely, if $t \sim _{\tau, v}t'$ then there exist
$t_1 = t,\; t_2, \ldots, t_n = t'$, and $P_i \in \ca_{1, 1}$ (see
Definition \ref{def:usimv}) such that for each  $i= 1,\ldots, n-1$,
$t_i = P_i(\tau)$ and $t_{i + 1} = P_i(v)$ (or vice-versa).
By Assumption \ref{f:exUniCanRep},
$Red_{\tau, v}^*(t_i) = Red_{\tau, v}^*(t_{i+1}) $,
hence $Red_{\tau, v}^*(t) = Red_{\tau, v}^*(t')$.
\end{proof}
\begin{proposition}\label{prop:reduc}
Let $\tau \in \ct$, $t$ and $t'$ be two objects such that $|v| < |\tau|$,
$t \sim_ {\tau, v} t'$, and $|t| < |\tau|$. Then $t = t'$ if and only if
$|t| = |t'|$.
\end{proposition}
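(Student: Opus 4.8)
The forward implication is immediate, since equal objects have equal length by (Ax-3); the content of the statement is the converse, so I would assume $|t| = |t'|$, together with the standing hypotheses $\tau \in \ct$, $|v| < |\tau|$, $|t| < |\tau|$ and $t \sim_{\tau, v} t'$, and derive $t = t'$.

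The first observation is that $|t| < |\tau|$ forces $t$ to be $\tau$-irreducible. Indeed, any sub-object of $t$ has length at most $|t| < |\tau|$ by (Ax-3), so $\tau$ cannot occur as a sub-object of $t$; hence $t \in \T_\tau$ and, by the definition of $Red^*_{\tau, v}$, we have $Red^*_{\tau, v}(t) = t$. Since $|v| < |\tau|$, Proposition \ref{prop:sim} applies, and from $t \sim_{\tau, v} t'$ we obtain $Red^*_{\tau, v}(t') = Red^*_{\tau, v}(t) = t$.

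The second ingredient is that a single reduction step strictly shortens its argument: if $w$ is $\tau$-reducible, then, taking $P \in \ca_{1,1}$ to be the context of the leftmost occurrence of $\tau$, we have $w = P(\tau)$ and $Red_{\tau, v}(w) = P(v)$, so by Fact \ref{sec:length-conditions}, $|Red_{\tau, v}(w)| = |P(\bz)| + |v| < |P(\bz)| + |\tau| = |w|$. Iterating, $|Red^*_{\tau, v}(t')| \le |t'|$, with equality precisely when no reduction takes place, i.e. when $t'$ is already $\tau$-irreducible and $Red^*_{\tau, v}(t') = t'$. Combining this with the previous paragraph gives $|t'| = |t| = |Red^*_{\tau, v}(t')| \le |t'|$, which forces equality throughout; hence $t'$ is $\tau$-irreducible and $t = Red^*_{\tau, v}(t') = t'$, as required.

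The argument is short because almost all of the work has been front-loaded into Proposition \ref{prop:sim} and the additivity of length under $\star$. The most delicate point is the claim that a single $Red_{\tau, v}$ step decreases length, but this is a direct consequence of (Ax-3) once one writes the object and its reduct as $P(\tau)$ and $P(v)$ for a common context $P \in \ca_{1,1}$; beyond invoking the canonical-representative machinery correctly, I expect no genuine obstacle.
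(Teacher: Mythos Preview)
Your proof is correct and follows the same skeleton as the paper: invoke Proposition~\ref{prop:sim} to get $Red^*_{\tau,v}(t)=Red^*_{\tau,v}(t')$, observe that $t$ is $\tau$-irreducible because $|t|<|\tau|$, and conclude $t=t'$ once $t'$ is known to be $\tau$-irreducible as well. The only difference is in that last step: the paper simply notes that $|t'|=|t|<|\tau|$ already forces $t'\in\T_\tau$ directly (no object shorter than $\tau$ can contain $\tau$ as a sub-object), whereas you take a small detour through the length-decreasing property of a single $Red_{\tau,v}$ step to reach the same conclusion. Your argument is sound, but the detour is unnecessary here.
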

\begin{proof}
If $t = t'$ then obviously $|t| = |t'|$.
Since $t \sim_ {\tau, v} t'$, by Proposition \ref{prop:sim},
$Red_{\tau, v}^*(t) = Red_{\tau, v}^*(t')$. But $|t'| = |t| < |\tau|$
implies that both $t'$ and $t$ are $\tau$-irreducible, hence
$t = Red_{\tau, v}^*(t) = Red_{\tau, v}^*(t') = t'$.
\end{proof}
\subsection{Strong irreducibility}
\label{sec:strong-irred}
By Propositions \ref{prop:sim} and \ref{prop:reduc}, we get that if
$|t| < |\tau|$ and $|Red^*_{\tau, v}(t')| > |\tau|$ then
$t \not\sim_{\tau, u} t'$. To prove that if $|t'| > |\tau|$ then
$|Red^*_{\tau, v}(t')| > |\tau|$, it is enough to prove that if $t'$
contains a sub-object $w$ of length $n \geq |\tau|$ then $w$ is a
sub-object of $Red^*_{\tau, v}(t')$.
This leads to the following definition.

\begin{definition}\label{def:str}
Let $\tau \in \ca$, an object $w$ is said to be {\em strongly
$\tau$-irreducible} if  $|w|\geq|\tau|$ and if whenever $w$ is a
sub-object of some $t \in \ca$, $w$ also is a sub-object of
$Red^*_{\tau, v}(t)$ for any $v$ such that $|v| < |\tau|$.
\end{definition}
We finally state the following assumption on $\ct$, the truth of
which is proven in Proposition \ref{ass2.t} (resp. \ref{ass2.w}) for
trees (resp. for words).
\begin{assumption}\label{prop:exs}
For all $\tau \in \ct$ and for all $\tau$-irreducible unary polynomials
$P$ of degree $k$ such that $|\tau| \geq 2k + 4$, we have the
following property:

If for all $u\in \ca$ such that $\wid u  \leq 1$, $P(u)$ is
$\tau$-reducible, then there exists $\theta \in \ca$ of length 1 and a
strongly $\tau$-irreducible sub-object $w$ of $P(\theta)$ of length
not less than $|\tau|$ (i.e., $|w| \geq |\tau|$).
\end{assumption}

\section{Proof of the main theorem}
\label{sec:cp-functions}

From now on, we postulate the existence of a set $\ct$ which
satisfies Assumptions \ref{f:exUniCanRep} and \ref{prop:exs}.

\subsection{The induction hypothesis}

The polynomiality of CP functions will be  proved by induction on
their arity. The basic step of this induction is obvious and common
to all algebras we consider: a function of arity $0$ is a constant,
which is a polynomial function.

For the inductive step, note that if $n \geq 0$ and $f$ is a
$(n + 1)$-ary CP function of multidegree
$\pair{k_1, \ldots, k_n, k_{n+1}}$, then for all $t$, $f_t$ defined by
$f_t(u_1, \ldots, u_n) = f(u_1, \ldots, u_n,t)$ is CP with multidegree
$\pair{k_1, \ldots, k_n}$, hence the induction hypothesis:

\begin{fact}\label{hypInduction}
\fbox{\begin{minipage}{0.8\textwidth}
{\bf Induction hypothesis.} For any $t \in \ca$, there exists a
polynomial $Q_t$ of multidegree $\pair{k_1, \ldots, k_n}$ such that:

\qquad $\forall u_1, \ldots, u_n \in \ca, \quad Q_t(u_1, \ldots, u_n)
 = f(u_1, \ldots, u_n, t).$
\end{minipage} }
\end{fact}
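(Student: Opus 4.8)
The plan is to read Fact \ref{hypInduction} as the inductive hypothesis of the induction on arity that establishes Theorem \ref{E}, so that its justification is short: it is just an instance of the theorem one rung below, applied to the sections $f_t$ of $f$. First I would fix $t \in \ca$ and check that the $n$-ary function $f_t(u_1,\ldots,u_n) = f(u_1,\ldots,u_n,t)$ is CP. This is immediate from Definition \ref{def1:cp} together with reflexivity of every congruence: given any congruence $\sim$ and any $u_i \sim u_i'$ for $i = 1,\ldots,n$, one also has $t \sim t$, whence $f(u_1,\ldots,u_n,t) \sim f(u_1',\ldots,u_n',t)$, i.e. $f_t(\vec u) \sim f_t(\vec u')$.

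Next I would pin down the multidegree of $f_t$. Since $f$ has multidegree $\pair{k_1,\ldots,k_n,k_{n+1}}$, Proposition \ref{ALUN} (or Proposition \ref{ALUNone} in the tree case with a one-letter alphabet) gives $|f(u_1,\ldots,u_n,t)| = |f(\bz,\ldots,\bz)| + \sum_{i=1}^n k_i|u_i| + k_{n+1}|t|$. Absorbing the constant term $k_{n+1}|t|$ into $|f_t(\bz,\ldots,\bz)| = |f(\bz,\ldots,\bz,t)|$, this reads $|f_t(u_1,\ldots,u_n)| = |f_t(\bz,\ldots,\bz)| + \sum_{i=1}^n k_i|u_i|$, so $f_t$ has multidegree $\pair{k_1,\ldots,k_n}$.

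Now I would invoke the induction hypothesis proper, namely that Theorem \ref{E} holds for all $n$-ary CP functions (the base case $n=0$ being the trivial fact that a constant is a polynomial function). Applied to the $n$-ary CP function $f_t$, it yields a polynomial $Q_t$ with $f_t = \widetilde{Q_t}$. It then remains to confirm that the multidegree of $Q_t$ is exactly $\pair{k_1,\ldots,k_n}$: by Fact \ref{sec:length-conditions} the length of $\widetilde{Q_t}(\vec u)$ is affine in the $|u_i|$ with coefficients equal to the multidegree of $Q_t$, and since $\widetilde{Q_t} = f_t$ these coefficients must agree with those found above for $f_t$ (each $k_i$ being read off by varying $|u_i|$ while holding the other arguments at $\bz$). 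This produces the desired $Q_t$.

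I do not expect a genuine obstacle here: the statement is essentially a bookkeeping step that unpacks what ``induction on arity'' means for $f$. The only points needing care are the two routine verifications — that fixing the last argument of a CP function preserves the CP property, and that the multidegree truncates correctly — both of which reduce to reflexivity of congruences and to the length formulas of Section \ref{sec:length}. The real work of the inductive step lies \emph{afterwards}, in assembling a single polynomial $P_f$ for $f$ out of the family $\{Q_t\}_{t \in \ca}$, which is not part of the present statement.
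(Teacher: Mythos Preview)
Your proposal is correct and matches the paper's own treatment: the paper introduces Fact~\ref{hypInduction} precisely as the induction hypothesis, justified by the one-line observation (immediately preceding it) that each section $f_t$ is CP with multidegree $\pair{k_1,\ldots,k_n}$, so the $n$-ary case of Theorem~\ref{E} yields $Q_t$. You have simply spelled out in full the two routine checks (CP via reflexivity, multidegree via the length formula) that the paper leaves implicit.
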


\begin{definition}
The {\it polynomial $P_\tau$ associated with $f$ and $\tau \in \ct$}
is the unique $\tau$-irreducible polynomial of multidegree
$\pair{k_1, \ldots, k_n, m}$ such that
$$\forall u_1, \ldots, u_n \in \ca,\  P_\tau(u_1, \ldots, u_n, \tau)
 = Q_\tau(u_1, \ldots, u_n) = f(u_1, \ldots, u_n, \tau).
$$
It is also defined by $P_\tau = Red^*_{\tau, x_{n+1}}(Q_\tau)$,
considering $P_\tau$ and $Q_\tau$ as objects in
$\ca( \S \cup \set{x_1, \ldots, x_n, \allowbreak x_{n+1}})$.
\end{definition}
Figure \ref{fig:polirr} illustrates this definition in the algebra of binary
trees.
\subsection{Partial polynomiality of CP functions}
Assuming the hypothesis stated in Fact \ref{hypInduction}, we can
proceed and prove
\begin{proposition}\label{prop:Ptau}
Let $\tau \in \ct$. If $\wid u < \wid \tau$ and if
$\wid{f(u_1, \ldots, u_n, u)} < \wid \tau$ then
\begin{itemize}
\itemsep=0.9pt
\item $f(u_1 ,\ldots, u_n, u)
 = Red^*_{\tau, u}(P_\tau(u_1, \ldots, u_n, u))$

\item either $m = k_{n+1}$ and
$f(u_1, \ldots, u_n, u) = P_\tau(u_1, \ldots, u_n, u)$, \\or
$m < k_{n+1}$ and $P_\tau(u_1, \ldots, u_n, u)$ is $\tau$-reducible.
\end{itemize}
\end{proposition}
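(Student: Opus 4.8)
The plan is to treat the two bullets separately, deriving the first from a short congruence chain and the second from a length count.

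For the first bullet, I would start from the observation that $\tau \sim_{\tau, u} u$: the identity polynomial $y \in \ca_{1,1}$ witnesses $\langle \tau, u\rangle$ as one of the generating pairs in Definition \ref{def:usimv}. Since $f$ is CP and the congruence is applied only to the last argument (with $u_1,\ldots,u_n$ fixed by reflexivity), this gives $f(u_1, \ldots, u_n, \tau) \sim_{\tau, u} f(u_1, \ldots, u_n, u)$. By the definition of $P_\tau$ the left-hand side equals $P_\tau(u_1, \ldots, u_n, \tau)$, and $P_\tau$, being a polynomial, is itself CP, so $P_\tau(u_1, \ldots, u_n, \tau) \sim_{\tau, u} P_\tau(u_1, \ldots, u_n, u)$. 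Transitivity then yields $f(u_1, \ldots, u_n, u) \sim_{\tau, u} P_\tau(u_1, \ldots, u_n, u)$. Since $|u| < |\tau|$, Proposition \ref{prop:sim} applies and both objects share the same canonical representative under $Red^*_{\tau, u}$; and because $|f(u_1, \ldots, u_n, u)| < |\tau|$, the object $f(u_1, \ldots, u_n, u)$ is $\tau$-irreducible (a $\tau$-reducible object contains $\tau$ as a sub-object, hence has length at least $|\tau|$) and so is fixed by $Red^*_{\tau, u}$. This gives $f(u_1, \ldots, u_n, u) = Red^*_{\tau, u}(P_\tau(u_1, \ldots, u_n, u))$.

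For the second bullet, I would count the reduction steps. Let $r$ be the number of times $Red^*_{\tau, u}$ rewrites $P_\tau(u_1, \ldots, u_n, u)$; each step replaces one occurrence of $\tau$ by $u$ and therefore shortens the length by $|\tau| - |u| > 0$, so the first bullet gives $|f(u_1, \ldots, u_n, u)| = |P_\tau(u_1, \ldots, u_n, u)| - r(|\tau| - |u|)$. Next I would invoke the affine length formulas: Fact \ref{sec:length-conditions} for $P_\tau$ (multidegree $\langle k_1, \ldots, k_n, m\rangle$) and Proposition \ref{ALUN} (or \ref{ALUNone} for trees) for $f$ (multidegree $\langle k_1, \ldots, k_n, k_{n+1}\rangle$). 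Evaluating both at the last argument $\tau$, where $f$ and $P_\tau$ agree, ties the additive constants together as $|P_\tau(\bz, \ldots, \bz)| - |f(\bz, \ldots, \bz)| = (k_{n+1} - m)|\tau|$. Substituting this into the previous length identity and cancelling the common terms $\sum_i k_i|u_i|$ and $|f(\bz, \ldots, \bz)|$, everything collapses to
\[ (k_{n+1} - m - r)(|u| - |\tau|) = 0. \]
Since $|u| \neq |\tau|$, this forces $r = k_{n+1} - m$; as $r \geq 0$ we get $m \leq k_{n+1}$, so the two announced cases are exhaustive. If $m = k_{n+1}$ then $r = 0$, no reduction occurs, $P_\tau(u_1, \ldots, u_n, u)$ is already $\tau$-irreducible and equals $f(u_1, \ldots, u_n, u)$; if $m < k_{n+1}$ then $r > 0$, so at least one occurrence of $\tau$ was reduced and $P_\tau(u_1, \ldots, u_n, u)$ is $\tau$-reducible.

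The congruence chain is routine; the delicate part will be the length bookkeeping. The key is to anchor the two additive constants through the common value at $\tau$, and to notice that the sign condition $|u| < |\tau|$ is precisely what makes the reduction count $r$ independent of the chosen tuple and equal to $k_{n+1} - m$. A subtle benefit worth flagging is that this same computation simultaneously proves $m \leq k_{n+1}$, which is exactly what guarantees that the dichotomy in the second bullet is complete and leaves no spurious case $m > k_{n+1}$.
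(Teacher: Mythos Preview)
Your proof is correct and follows essentially the same route as the paper's: the same congruence chain for the first bullet, and a length comparison anchored at the common value $f(u_1,\ldots,u_n,\tau)=P_\tau(u_1,\ldots,u_n,\tau)$ for the second. The only cosmetic difference is that the paper sets $d=|f(u_1,\ldots,u_n,\tau)|$, compares $d-k_{n+1}(|\tau|-|u|)$ with $d-m(|\tau|-|u|)$, and invokes Proposition~\ref{prop:reduc} for the ``equal iff equal length'' step, whereas you count reduction steps $r$ directly and solve $(k_{n+1}-m-r)(|u|-|\tau|)=0$; your version has the small bonus of yielding $m\le k_{n+1}$ (the exhaustiveness of the dichotomy) in the same breath.
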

\begin{proof}
Obviously, $f(u_1, \ldots, u_n, u) \sim_{\tau, u}
 f(u_1, \ldots, u_n, \tau) = P_\tau(u_1, \ldots, u_n, \tau)$
 $\sim_{\tau,u}P_\tau(u_1,\ldots,u_n,u)$.
As
$\wid{f(u_1, \ldots, u_n, u)} < \wid \tau$, $f(u_1, \ldots, u_n, u)$
is $\tau$-irredu\-cible. 
Thus, by Assumption \ref{f:exUniCanRep}, \\
$f(u_1, \ldots,\allowbreak  u_n, u)
 = Red^*_{\tau, u}(P_\tau(u_1, \allowbreak \ldots, \allowbreak u_n, u)).$
Let
$d = |f(u_1, \ldots, u_n, \tau)|
 = |P_{\tau}(u_1, \ldots, u_n, \tau)|$.
Then
$|f(u_1, \ldots, u_n, \allowbreak u)| = d - k_{n+1}(|\tau| - |u|)$ and
$|P_{\tau}(u_1, \ldots, u_n, u)| = d - m(|\tau| - |u|)$.

By Proposition \ref{prop:reduc},
$P_\tau(u_1, \ldots, u_n, u) = f(u_1, \ldots, u_n, u)$ if and only if
$|P_\tau(u_1, \ldots, u_n, u)| = |f(u_1, \ldots, u_n, u)|$ if and only if
$m = k_{n + 1}$.

Since
$f(u_1, \ldots, u_n, u) = Red^*_{\tau, u}(P_\tau(u_1, \ldots, u_n, u))$,
if $f(u_1, \ldots, u_n, u) \neq P_\tau(u_1, \ldots, u_n, u)$ then
$P_\tau(u_1, \ldots, u_n, u)$ is not $\tau$-irreducible.\\
Hence
$d - m(|\tau| - |u|) = |P_\tau(u_1, \ldots, u_n, u)| \geq |\tau|>
    |f(u_1, \ldots, u_n, u)| = d -  k_{n+1} (|\tau| - |u|)$, which implies
$m < k_{n + 1}$.
\end{proof}
An immediate consequence of Proposition \ref{prop:Ptau} is:
\begin{proposition}\label{p:appl}
Let $\tau \in \ct$, let $\pair{k_1, \ldots, k_n, m}$ be the multidegree
of $P_\tau$. Then
\begin{enumerate}
\itemsep=0.9pt
\item either $m = k_{n+1}$ and  for all $u \in \ca$ such that
$\wid u \leq \wid \tau$, and for all $u_1, \ldots, u_n \in \ca$ such that
$|f(u_1, \ldots, u_n, u)| < |\tau|$, we have\\
$P_\tau(u_1, \ldots, u_n, u) = f(u_1, \ldots, u_n, u)$,

\item or $m < k_{n+1}$ and for all $u \in \ca$ such that
$\wid u \leq \wid \tau $, and for all $u_1, \ldots, u_n \in \ca$ such
that $|f(u_1, \ldots, u_n, u)| < |\tau|$, $P_\tau(u_1, \ldots, u_n, u)$
is $\tau$-reducible.
\end{enumerate}
\end{proposition}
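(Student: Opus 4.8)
The plan is to read everything off the pointwise statement of Proposition~\ref{prop:Ptau}, exploiting only that $m$ and $k_{n+1}$ are \emph{fixed} natural numbers attached to $P_\tau$ and to $f$: exactly one of $m=k_{n+1}$ and $m<k_{n+1}$ holds, and this single alternative must govern both items simultaneously. First I would dispose of a degenerate possibility. If $|f(\bz,\ldots,\bz)|\ge|\tau|$, then by the length identity $|f(u_1,\ldots,u_n,u)|=|f(\bz,\ldots,\bz)|+\sum_{i=1}^n k_i|u_i|+k_{n+1}|u|$ of Proposition~\ref{ALUN} the hypothesis $|f(u_1,\ldots,u_n,u)|<|\tau|$ is never met, and both items hold vacuously. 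Otherwise the tuple $(\bz,\ldots,\bz)$ satisfies the hypothesis, so Proposition~\ref{prop:Ptau} applies to it and fixes which of $m=k_{n+1}$, $m<k_{n+1}$ occurs; since these numbers do not depend on the arguments, the same alternative holds for every admissible tuple. For any $u$ with $|u|<|\tau|$ and any $u_1,\ldots,u_n$ with $|f(u_1,\ldots,u_n,u)|<|\tau|$, the two bullets of Proposition~\ref{prop:Ptau} are then \emph{verbatim} the two items to be proved, so nothing remains on the open range $|u|<|\tau|$.

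The only genuinely new point is the boundary $|u|=|\tau|$, which Proposition~\ref{prop:Ptau} does not cover, and I would handle it again through the length identity. In item~(2) we have $k_{n+1}>m\ge0$, hence $k_{n+1}\ge1$; and suppose for the moment $k_{n+1}\ge1$ in item~(1) as well. Then $|f(u_1,\ldots,u_n,u)|\ge k_{n+1}|u|=k_{n+1}|\tau|\ge|\tau|$, contradicting the standing hypothesis $|f(u_1,\ldots,u_n,u)|<|\tau|$. Consequently no admissible tuple has $|u|=|\tau|$ when $k_{n+1}\ge1$, so the boundary is vacuous in item~(2) and in the subcase $k_{n+1}\ge1$ of item~(1). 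Thus the whole statement collapses to a single residual situation: item~(1) with $m=k_{n+1}=0$ and $|u|=|\tau|$.

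In that residual case $P_\tau$ contains no occurrence of $x_{n+1}$, so $P_\tau(u_1,\ldots,u_n,u)=P_\tau(u_1,\ldots,u_n,\bz)$; and since $k_{n+1}=0$ gives $|f(u_1,\ldots,u_n,\bz)|=|f(u_1,\ldots,u_n,u)|<|\tau|$, the already-settled open range yields $P_\tau(u_1,\ldots,u_n,\bz)=f(u_1,\ldots,u_n,\bz)$. Hence the goal reduces to the independence statement $f(u_1,\ldots,u_n,u)=f(u_1,\ldots,u_n,\bz)$ for $|u|=|\tau|$. I would try to derive this from congruence preservation: since $\bz\sim_{u,\bz}u$, we get $f(u_1,\ldots,u_n,u)\sim_{u,\bz}f(u_1,\ldots,u_n,\bz)$, and because $k_{n+1}=0$ forces both values to have length $<|\tau|=|u|$, both are $u$-irreducible. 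The hard part will be upgrading this \emph{congruence} to an \emph{equality}: two $u$-irreducible objects that are $\sim_{u,\bz}$-equivalent need not coincide when $u\notin\ct$ (for instance $ab\sim_{aba,\bz}ba$, with both words shorter than $aba$), so one cannot simply invoke the canonical-representative uniqueness of Proposition~\ref{prop:sim}, which is guaranteed only for $\tau\in\ct$. I expect to close this last case either by using the full strength of congruence preservation (exploiting congruences other than the single $\sim_{u,\bz}$) rather than a one-step reduction, or by a reduction analogous to Proposition~\ref{ALUNone} that replaces $f$ by a CP function with strictly positive last multidegree $k_{n+1}+1\ge1$, from which $f$ is recovered via the (unique, in the tree case) decomposition of the result; this is the single delicate point of an otherwise immediate argument.
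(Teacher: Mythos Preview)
Your core argument is exactly what the paper has in mind: the paper records Proposition~\ref{p:appl} as ``an immediate consequence of Proposition~\ref{prop:Ptau}'', i.e.\ precisely your observation that $m$ and $k_{n+1}$ are fixed integers attached to $P_\tau$ and to $f$, so the dichotomy in Proposition~\ref{prop:Ptau} is the same for every admissible tuple.

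Where you diverge from the paper is in worrying about the boundary $|u|=|\tau|$: Proposition~\ref{prop:Ptau} is stated with the strict inequality $|u|<|\tau|$, while Proposition~\ref{p:appl} is stated with $|u|\le|\tau|$. The paper does not comment on this at all. Your case analysis is correct: the boundary is vacuous whenever $k_{n+1}\ge1$, hence in all of item~(2) and in most of item~(1), leaving only the case $m=k_{n+1}=0$. Your reduction of that case to the independence $f(u_1,\ldots,u_n,u)=f(u_1,\ldots,u_n,\bz)$ is also correct, and you are right that the congruence $\sim_{u,\bz}$ alone will not close it, since $u$ with $|u|=|\tau|$ need not lie in $\ct$ and so Proposition~\ref{prop:sim} is unavailable.

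This residual case is a genuine (small) gap in your write-up, and the paper simply does not address it. Two remarks put it in perspective. First, the only use of Proposition~\ref{p:appl} in the paper is inside Proposition~\ref{prop:m=k}, where one immediately specialises to $|u|\le 1<|\tau|$; so for the paper's purposes the non-strict inequality is never actually needed and the ``$\le$'' is most plausibly a slip for ``$<$''. Second, if you insist on proving the statement as printed, the clean fix is not to wrestle with $\sim_{u,\bz}$ but to invoke a larger witness: pick $\tau'\in\ct$ with $|\tau'|>|\tau|$; since $|\bz|<|\tau'|$, Proposition~\ref{prop:Ptau} applied to $\tau'$ and to $\bz$ shows either $m'=k_{n+1}=0$ (so $P_{\tau'}$ is independent of $x_{n+1}$ and $f(u_1,\ldots,u_n,u)=P_{\tau'}(u_1,\ldots,u_n,u)=P_{\tau'}(u_1,\ldots,u_n,\bz)=f(u_1,\ldots,u_n,\bz)$ for $|u|=|\tau|<|\tau'|$), or $m'<k_{n+1}=0$, which is impossible. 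This avoids the uniqueness issue you flagged.
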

%
\subsection{Polynomiality of CP functions}
We first prove that for almost all $\tau$ we are in case (1) of
Proposition \ref{p:appl}.
\begin{proposition}\label{prop:m=k}
Let $\pair{k_1, \ldots, k_n, k_{n+1}}$ be the multidegree of $f$, let
$k = k_1 + \cdots + k_n + k_{n+1}$, and let $\tau \in \ct$  be  such that
$|\tau | \geq 2k + 4$. For all $u \in \ca$ such that $\wid u < \wid \tau$
and for all $u_1, \ldots, u_n \in \ca$ such that
$|f(u_1, \ldots, u_n, u)| < |\tau|$, we have
$P_\tau(u_1, \ldots, u_n, u) = f(u_1, \ldots, u_n, u)$.
\end{proposition}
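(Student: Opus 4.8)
The plan is to argue by contradiction, excluding the second alternative of Proposition~\ref{p:appl}. That proposition gives a dichotomy: either $m=k_{n+1}$, in which case $P_\tau(u_1,\ldots,u_n,u)=f(u_1,\ldots,u_n,u)$ already holds throughout the range in question and there is nothing more to prove, or $m<k_{n+1}$ and $P_\tau(u_1,\ldots,u_n,u)$ is $\tau$-reducible for every admissible tuple, i.e.\ whenever $|u|\le|\tau|$ and $|f(u_1,\ldots,u_n,u)|<|\tau|$. I will therefore assume $m<k_{n+1}$ and derive a contradiction from $|\tau|\ge 2k+4$.

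The engine of the contradiction is Assumption~\ref{prop:exs}, which turns a reducible $\tau$-irreducible \emph{unary} polynomial into a strongly $\tau$-irreducible sub-object of length at least $|\tau|$. To obtain such a unary polynomial I collapse $P_\tau$ along its diagonal, setting $\bar P(y):=P_\tau(y,\ldots,y)$, thereby identifying all $n+1$ variables to a single variable $y$. This is a variable \emph{renaming}, not a value substitution, so it leaves the variable-free sub-objects of the polynomial unchanged; hence $\bar P$ is again $\tau$-irreducible by Definition~\ref{def:reducibleP}. Its degree is $d=k_1+\cdots+k_n+m$, and since $m<k_{n+1}$ we get $d<k$, so the degree requirement of Assumption~\ref{prop:exs} is met: $|\tau|\ge 2k+4>2d+4$.

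Next I verify the hypothesis of Assumption~\ref{prop:exs}, namely that $\bar P(u)$ is $\tau$-reducible for every $u$ with $|u|\le 1$. For such $u$ the diagonal tuple $(u,\ldots,u)$ (all $n+1$ entries equal to $u$) satisfies $|f(u,\ldots,u)|=|f(\bz,\ldots,\bz)|+k\,|u|$ by the multidegree formula (Propositions~\ref{ALUN} and~\ref{ALUNone}); assuming this is $<|\tau|$, the tuple is admissible, so being in the second alternative forces $\bar P(u)=P_\tau(u,\ldots,u)$ to be $\tau$-reducible. Assumption~\ref{prop:exs} then produces an object $\theta$ of length $1$ together with a strongly $\tau$-irreducible sub-object $w$ of $\bar P(\theta)=P_\tau(\theta,\ldots,\theta)$ with $|w|\ge|\tau|$. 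Specialising Proposition~\ref{prop:Ptau} to the admissible tuple $(\theta,\ldots,\theta)$ gives $f(\theta,\ldots,\theta)=Red^*_{\tau,\theta}\bigl(P_\tau(\theta,\ldots,\theta)\bigr)$; since $w$ is strongly $\tau$-irreducible (Definition~\ref{def:str}) and a sub-object of $P_\tau(\theta,\ldots,\theta)$, it is also a sub-object of this reduct, so $|f(\theta,\ldots,\theta)|\ge|w|\ge|\tau|$. This contradicts $|f(\theta,\ldots,\theta)|<|\tau|$, and the contradiction rules out $m<k_{n+1}$, leaving case~(1).

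The step I expect to be the main obstacle is the admissibility check in the previous paragraph, i.e.\ ensuring $|f(\bz,\ldots,\bz)|+k<|\tau|$ so that the length-$1$ diagonals fall inside the range where the second alternative applies. Merely knowing $|\tau|\ge 2k+4$ controls the \emph{degree} fed into Assumption~\ref{prop:exs} but not the additive constant $|f(\bz,\ldots,\bz)|$; when that constant is comparable to $|\tau|$ the length-$1$ diagonals leave the admissible range and the verification stalls. I would resolve this by observing that the asserted identity is vacuous unless $|f(\bz,\ldots,\bz)|<|\tau|$, and that in the intended application one is free to take $|\tau|$ as large as needed relative to $f$, so that $|\tau|>|f(\bz,\ldots,\bz)|+k$ may be assumed alongside $|\tau|\ge 2k+4$.
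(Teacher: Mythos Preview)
Your argument is essentially identical to the paper's: assume $m<k_{n+1}$, collapse $P_\tau$ to a unary $\tau$-irreducible polynomial of degree $M=k_1+\cdots+k_n+m<k$, invoke Assumption~\ref{prop:exs} to produce a strongly $\tau$-irreducible $w$ with $|w|\ge|\tau|$ inside $P_\tau(\theta,\ldots,\theta)$, then use Proposition~\ref{prop:Ptau} and Definition~\ref{def:str} to push $w$ into $f(\theta,\ldots,\theta)$ and obtain the length contradiction.

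The admissibility issue you flag in your last paragraph is real, and the paper's own proof glosses over it in exactly the same way: the paper simply writes ``$|f(\theta,\ldots,\theta,\theta)|<|\tau|$'' in the final chain of inequalities without justification, and earlier asserts that $P'_\tau(u)$ is $\tau$-reducible for all $|u|\le 1$ while case~(2) of Proposition~\ref{p:appl} only guarantees this when $|f(u,\ldots,u)|<|\tau|$. The hypothesis $|\tau|\ge 2k+4$ alone does not control $|f(\bz,\ldots,\bz)|$. Your proposed fix---requiring $|\tau|>|f(\bz,\ldots,\bz)|+k=|f(a,\ldots,a)|$ in addition---is precisely what the paper does in practice: immediately after Proposition~\ref{prop:m=k} it restricts to $|\tau_i|>|f(a,\ldots,a)|$, and Theorem~\ref{prop:avd} again takes $\tau$ ``of length greater than $|f(a,a,\ldots,a)|$''. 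So your diagnosis is correct and your remedy matches the paper's actual usage; note however that your first remark (vacuousness unless $|f(\bz,\ldots,\bz)|<|\tau|$) does not by itself close the gap, since one could have $|f(\bz,\ldots,\bz)|<|\tau|\le|f(\bz,\ldots,\bz)|+k$.
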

\begin{proof}
By Proposition \ref{p:appl} it is enough to prove that $m <k_{n+1}$
is impossible.\smallskip

Let $P_\tau$ be the $\tau$-irreducible polynomial associated with
$\tau$ of multidegree $\pair{k_1, \ldots, k_n, m}$ and let us assume
that  $m <  k_{n+1}$. Then, by Proposition \ref{p:appl}, we have:
for all $u \in \ca$ such that $\wid u \leq \wid \tau$ and
$|f(u, \ldots, u, u)| < |\tau|$, the object $P_\tau(u, \ldots, u, u)$ is
$\tau$-reducible.

We now consider the $\tau$-irreducible unary polynomial $P'_\tau$
of degree $M = k_1 + \cdots + k_n + m < k$, obtained by
substituting $x_1$ for any variable $x_i$  in $P_\tau$.
Since $P'_\tau(u)$ is $\tau$-reducible for all $u$ such that
$|u| \leq 1 < |\tau|$, by Assumption \ref {prop:exs} there exist
$\theta$ of length $1$ and a strongly $\tau$-irreducible sub-object
$w$ of $P'_\tau(\theta) = P_\tau(\theta, \ldots, \theta, \theta)$ of
length not less than $\tau$. By Proposition
\ref{prop:Ptau}, $w$ is a sub-object of
$Red^*_{\tau, \theta}(P_\tau(\theta, \ldots, \theta, \theta))
 = f(\theta, \ldots, \theta, \theta)$.
Hence $|w| \!\leq \! |f(\theta, \ldots, \theta, \theta)|\!< \!|\tau|\!\leq \! |w|$, contradiction.
\end{proof}
Let $\tau_1$ and $\tau_2$  be such that $|\tau_i|> |f(a,\ldots,a)|$. Then,
by Proposition \ref{prop:m=k}, we have :\\
For all $ u_1, u_2, \dots, u_{n}, u$ such that $|u|$ and
$|f(u_1, \ldots, u_n)|$ are less that $|\tau_1|$ and $|\tau_2|$ then
\begin{eqnarray}\label{eq:1}
{P}_{\tau_1}(u_1, \ldots, u_{n}, u) = &f(u_1, \ldots, u_n, u)
 &={P}_{\tau_2}(u_1, \ldots, u_{n},u).
\end{eqnarray}
We first prove that $P_{\tau_1} = P_{\tau_2}$ as a consequence
of the next Proposition by observing  that equation (\ref{eq:1}) holds
for all $u_i$, $u$ of length 1.

\begin{proposition}\label{prop:P=Q}
Let $P$, $Q$ be polynomials of multidegree
$\pair{k_1, \ldots, k_n}$. \\
If, for all $u_1, u_2, \dots, u_{n}$ of length 1,  \
${P}(u_1, \ldots, u_{n}) = {Q}(u_1, \ldots, u_{n})$ then $P = Q$.
\end{proposition}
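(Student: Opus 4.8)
The plan is to show that a polynomial $P\in\ca_n$ is completely determined by its values on a small, fixed family of length-$1$ substitutions, and then to observe that $P$ and $Q$ produce exactly the same family of values, forcing them to be the same element of $\ca_n$. The key device is a \emph{marking} substitution together with the fact (the Note in Section~\ref{sec:polynomials}) that $P(\vec u)$ is the image of $P$ under the homomorphism $h_{\vec u}\colon\ca_n\to\ca$ that fixes $\S\cup\set{\bz}$ and sends $x_i\mapsto u_i$. First I would fix two \emph{distinct} objects $\theta_0,\theta_1\in\ca$ of length $1$: for words this requires $|\S|\ge 2$ and one takes two distinct letters $\theta_0=a$, $\theta_1=b$; for trees one takes $\theta_0=a$ (a single leaf) and $\theta_1=a\star\bz$, which are distinct objects of length $1$ by Fact~\ref{uniquedecompo}, so that $|\S|\ge 1$ suffices. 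These are exactly the alphabet hypotheses of Theorem~\ref{E}.

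Next I would introduce the $n+1$ substitutions $\sigma_0=(\theta_0,\dots,\theta_0)$ and, for $1\le j\le n$, the tuple $\sigma_j$ obtained from $\sigma_0$ by putting $\theta_1$ in position $j$. The reconstruction claim is that from the $n+1$ values $h_{\sigma_0}(P),h_{\sigma_1}(P),\dots,h_{\sigma_n}(P)$ one recovers $P$ itself. Indeed $h_{\sigma_0}(P)$ carries the whole ``skeleton'' of $P$, with every constant sub-object in place and every variable occurrence collapsed to $\theta_0$, while comparing $h_{\sigma_0}(P)$ with $h_{\sigma_j}(P)$ exposes precisely the occurrences of $x_j$, namely the places where a $\theta_0$ has turned into a $\theta_1$. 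Reading the constants off $h_{\sigma_0}(P)$ and relabelling the marked occurrences by the appropriate $x_j$ then rebuilds $P$ uniquely.

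Making the comparison rigorous is where the two algebras differ slightly. For words, substituting a single letter for each variable is length-preserving, so $h_{\sigma_0}(P)$ and each $h_{\sigma_j}(P)$ are words of length $|P|$ that agree position by position except precisely at the positions carrying $x_j$, where they read $\theta_0$ and $\theta_1$ respectively; hence those positions, together with the constant letters at the remaining positions, are read off directly. For trees I would descend $h_{\sigma_0}(P)$ and $h_{\sigma_j}(P)$ in parallel using the unique decomposition of Fact~\ref{uniquedecompo}: the two trees have identical shape except at the leaf-addresses carrying $x_j$, where $h_{\sigma_0}(P)$ has a single leaf $\theta_0$ while $h_{\sigma_j}(P)$ has the subtree $\theta_1=a\star\bz$; these addresses are therefore exactly the occurrences of $x_j$, and the constant leaves are recovered from $h_{\sigma_0}(P)$. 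In both cases $P$ is the unique element of $\ca_n$ reconstructed from the tuple $(h_{\sigma_0}(P),\dots,h_{\sigma_n}(P))$.

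To finish, the hypothesis gives $P(\vec u)=Q(\vec u)$ for every tuple of objects of length $1$, and in particular $h_{\sigma_i}(P)=h_{\sigma_i}(Q)$ for $i=0,\dots,n$. Since $P$ and $Q$ are rebuilt by the same procedure from the same tuple of values, we conclude $P=Q$. The main obstacle is exactly the reconstruction step: one must guarantee that the marking produces no spurious coincidences, i.e.\ that a position (word) or leaf-address (tree) changes from $\theta_0$ to $\theta_1$ when passing from $\sigma_0$ to $\sigma_j$ \emph{iff} it is an occurrence of $x_j$. Length-preservation of length-$1$ substitutions in the word case and unique decomposition in the tree case are precisely what rule this out, while the equal-multidegree hypothesis on $P$ and $Q$ ensures the reconstructed skeletons have matching sizes, keeping the parallel comparison well-posed.
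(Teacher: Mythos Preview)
Your argument is correct and takes a genuinely different route from the paper. The paper proceeds by a direct structural induction on a size measure $s(P)$ (namely $|P|$ for words, and the total number of symbols from $\S\cup\{\star\}\cup\{x_1,\dots,x_n\}$ for trees): in the inductive step one writes $P=P_1\star P_2$ and $Q=Q_1\star Q_2$ (for words one normalises by taking $|P_1|=|Q_1|=1$), observes that for length-$1$ tuples $\vec u$ the common value $P(\vec u)=Q(\vec u)$ decomposes uniquely as $P_1(\vec u)\star P_2(\vec u)=Q_1(\vec u)\star Q_2(\vec u)$, hence $P_i(\vec u)=Q_i(\vec u)$, and concludes $P_i=Q_i$ by the induction hypothesis. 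Your reconstruction argument instead shows, globally and without an induction on $P$, that every $P\in\ca_n$ is uniquely determined by the finite tuple $(h_{\sigma_0}(P),\dots,h_{\sigma_n}(P))$ of marked evaluations. The paper's proof is shorter and treats the two algebras almost uniformly in the inductive step; your approach, on the other hand, makes explicit what is really being used---namely the existence of two distinct length-$1$ objects, which is exactly the alphabet hypothesis of Theorem~\ref{E}---and in fact yields a slightly sharper statement: since the reconstruction is applied to $P$ and to $Q$ separately and only the equality of their evaluations is compared, the equal-multidegree hypothesis is never actually invoked in your proof.
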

\begin{proof}
For a polynomial $P$ {\em in the algebra of trees}, we define
$s(P)$ to be the number of symbols of
$\S \cup \{\star\} \cup \set{x_1, \ldots, x_n}$ occurring in $P$.
Formally $s(\bz) = 0$, $s(a) = 1$ for
$a \in \S \cup \set{x_1, \ldots, x_{n}}$, and
$s(u \star v) = 1+ s(u) + s(v)$.
For $P$ {\em  in the algebra of words}, we set $s(P) = |P|$.

\medskip
In both cases there exists at least two distinct objects of length 1:
either two distinct letters $a,\; b$, or the trees $a \star \bz$ and
$\bz \star a$.

The proof is by induction on $s(P)$.

\medskip {\bf Basis.}

\noindent (1)
If $s(P) = s(Q) = 0$ then $P = \bz = Q$.\medskip

\noindent (2) If $s(P) = s(Q)  = 1$ then
$P, \; Q \in \S \; \cup \; \set{x_1, \ldots, x_n}$. If $P$ and $Q$ are
both constants, the result follows from  equality
$P(u, \ldots, u) = Q(u, \ldots, u)$. If $P = x_i$ and $Q = x_j$ with
$i \neq j$, the hypothesis
$P(u_1,  \ldots, u_n) = Q(u_1,  \ldots, u_n)$ leads to a
contradiction, as soon as $u_i\neq u_j$, hence $i = j$.
If $P$ is a constant $u$ and $Q$ is a variable $x_i$, we have
$u = P(u', \ldots, u') = Q(u', \ldots, u') =  u '$, a contradiction
when $u \neq u'$.

\medskip {\bf Inductive step.} If $s(P) > 1$ then  $P = P_1 \star P_2$ and
$Q = Q_1 \star Q_2$, (taking $|P_1| = |Q_1| =1$ in case of words).
For any $ u_1, u_2, \dots, u_{n}$ of length 1, we have
${Q}(u_1, \ldots, u_{n}) = {P}(u_1, \ldots, u_{n})
 = {P_1}(u_1, \ldots, u_{n}) \star {P_2}(u_1, \ldots, u_{n})
 = {Q_1}(u_1, \ldots, u_{n}) \star {Q_2}(u_1, \ldots, u_{n})$
which implies $P_i(u_1, \ldots, u_{n}) = Q_i(u_1, \ldots, u_{n})$,
hence, by the induction hypothesis, $P_1 = Q_1$ and
$P_2 = Q_2$, and thus $P = Q$.
\end{proof}
\begin{theorem}\label{prop:avd}
Let $f$ be a CP function of multidegree
$\pair{k_1, \ldots, k_n, k_{n+1}}$. There exists a polynomial $P_f$
of multidegree $\pair{k_1, \ldots, k_n, k_{n+1}}$ such for all
$u_1, \ldots, u_n$, $u \in \ca$,
$P_f(u_1, \ldots, u_n, u) = f(u_1, \ldots, u_n, u)$.
\end{theorem}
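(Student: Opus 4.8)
The plan is to show that the polynomial $P_\tau$ associated with $f$ stabilises once $\tau$ is large enough, and that this stable polynomial already coincides with $f$ everywhere. First I would fix the total degree $k = k_1 + \cdots + k_{n+1}$ and restrict attention to those $\tau \in \ct$ with $|\tau| \geq 2k+4$ and $|\tau| > |f(a,\ldots,a)|$. By Proposition \ref{prop:m=k} every such $\tau$ forces $m = k_{n+1}$, so each $P_\tau$ has exactly the multidegree $\pair{k_1, \ldots, k_n, k_{n+1}}$ demanded by the statement, and it agrees with $f$ on every tuple $(u_1, \ldots, u_n, u)$ whose last coordinate and whose image both have length $< |\tau|$.

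The second step is to prove that $P_{\tau_1} = P_{\tau_2}$ for any two such $\tau_1, \tau_2$. Specialising equation (\ref{eq:1}) to inputs $u_1, \ldots, u_n, u$ all of length $1$, the length constraints $|u| < |\tau_i|$ and $|f(u_1, \ldots, u_n, u)| < |\tau_i|$ are automatically satisfied, since the defining length relation of the multidegree makes $|f(u_1,\ldots,u_n,u)|$ equal to the fixed value $|f(\bz, \ldots, \bz)| + k = |f(a,\ldots,a)|$ on every length-$1$ tuple. Hence $P_{\tau_1}$ and $P_{\tau_2}$ take the same value on all length-$1$ tuples, and being $(n+1)$-ary polynomials of the same multidegree they are equal by Proposition \ref{prop:P=Q}. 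I then \emph{define} $P_f$ to be this common polynomial, which is well defined and of multidegree $\pair{k_1, \ldots, k_n, k_{n+1}}$.

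Finally I would upgrade the partial agreement of Proposition \ref{prop:m=k} to agreement on all of $\ca^{n+1}$. Given arbitrary $u_1, \ldots, u_n, u \in \ca$, I pick $\tau \in \ct$ with $|\tau|$ exceeding each of $|u|$, $|f(u_1, \ldots, u_n, u)|$ and $2k+4$. Proposition \ref{prop:m=k} then yields $P_\tau(u_1, \ldots, u_n, u) = f(u_1, \ldots, u_n, u)$, and since $P_\tau = P_f$ by the previous step, we obtain $P_f(u_1, \ldots, u_n, u) = f(u_1, \ldots, u_n, u)$. The one point at which this step leaves the purely abstract framework is that it requires $\ct$ to contain objects of arbitrarily large length, so that such a $\tau$ exists for every input tuple; this is a property of the concrete sets $\ct$ built in Sections \ref{sec:tree} and \ref{sec:word} rather than a consequence of Assumptions \ref{f:exUniCanRep} and \ref{prop:exs} alone.

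The obstacle I expect here is organisational rather than deep: one must notice that the $\tau$ chosen in the last step depends on the input tuple, which is harmless precisely because the second step has already collapsed all the $P_\tau$ into the single polynomial $P_f$. The genuinely substantive work is done beforehand — in Proposition \ref{prop:m=k}, where the strong-irreducibility Assumption \ref{prop:exs} rules out $m < k_{n+1}$, and in Proposition \ref{prop:P=Q} — so the present theorem is essentially the assembly of these two facts together with the unboundedness of $\ct$.
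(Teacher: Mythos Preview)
Your proposal is correct and follows essentially the same approach as the paper: use Proposition \ref{prop:m=k} to force $m=k_{n+1}$ for large $\tau$, invoke Proposition \ref{prop:P=Q} on length-$1$ tuples to identify all such $P_\tau$ with a single $P_f$, and then, given arbitrary inputs, pick $\tau$ large enough so that $P_\tau=P_f$ agrees with $f$ there. Your write-up is in fact more careful than the paper's (you explicitly track the bound $|\tau|\geq 2k+4$ alongside $|\tau|>|f(a,\ldots,a)|$, and you note the implicit use of the unboundedness of $\ct$), but the argument is the same.
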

\begin{proof}
By Propositions \ref{prop:m=k} and \ref{prop:P=Q} there exists a
unique polynomial $P_f$ such that for all $\tau$ of length greater
than $|f(a, a, \ldots, a)|$, we have $P_\tau = P_f$.
For any $u_1, \ldots, u_n, u$, there exists $\tau$ such that
$|\tau|> \max(|u|, |f(u_1, \ldots, u_n,u )|)$. \smallskip \\
By Proposition \ref{prop:m=k},
$f(u_1, \ldots, u_n, u)
 = P_\tau(u_1, \ldots, u_n, u) = P_f(u_1, \ldots, u_n, u)$.
\end{proof}

\section{The case of trees}
\label{sec:tree}

We here consider the algebra of binary trees with labelled leaves.
For this algebra of trees we set
\begin{eqnarray} \label{tau:tree}
 \ct &=& \{\;\tau\in\ca \mid |\tau| \geq 2\;\}
\end{eqnarray}

\begin{proposition}\label{p:ir->sirr}
If a tree $w$ is $\tau$-irreducible, then it is strongly
$\tau$-irreducible.
\end{proposition}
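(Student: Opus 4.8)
The plan is to exploit the uniqueness of tree decomposition (Fact \ref{uniquedecompo}), which makes every occurrence of a subtree correspond to a node and forces any two subtree occurrences to be either nested or disjoint. Since strong $\tau$-irreducibility carries the length requirement $|w| \geq |\tau|$ (Definition \ref{def:str}), I read the statement with this hypothesis and verify the substantive claim: if $w$ is $\tau$-irreducible with $|w| \geq |\tau|$ and $w$ is a sub-object of $t$, then $w$ is a sub-object of $Red^*_{\tau, v}(t)$ for every $v$ with $|v| < |\tau|$.

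First I would record two elementary exclusions. Since $\tau$ is a sub-object of itself, $\tau$ is $\tau$-reducible, so $w$ being $\tau$-irreducible forces $w \neq \tau$. Moreover, a subtree occurrence of $w$ has length $|w| \geq |\tau| \geq 2$, so $w$ cannot sit strictly inside an occurrence of $\tau$ (that would give $|w| < |\tau|$), nor equal one (that would give $w = \tau$); dually, no occurrence of $\tau$ can lie inside $w$, since $w$ is $\tau$-irreducible. By the nested-or-disjoint dichotomy, in any $t$ containing $w$ every occurrence of $\tau$ is disjoint from the fixed occurrence of $w$.

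The core is a single-step persistence lemma phrased with contexts: writing $t = R(w)$ with $R \in \ca_{1,1}$ (Definition \ref{def:reducible}), I claim that if $R(w)$ is $\tau$-reducible then $Red_{\tau, v}(R(w)) = R'(w)$ for some $R' \in \ca_{1,1}$. I would prove this by induction on the size of the context $R$, following the recursive definition of $Red_{\tau, v}$ for trees (Definition \ref{def:RedTree}). If $R = y$ then $R(w) = w$ is $\tau$-irreducible and the hypothesis is vacuous. Otherwise $R = R_1 \star R_2$ with the hole in, say, $R_1$, so $R(w) = R_1(w) \star R_2$; as $w$ is then a proper subtree, the exclusions give $R(w) \neq \tau$, and $Red_{\tau, v}$ descends into the reducible child. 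If $R_1(w)$ is $\tau$-reducible, the induction hypothesis yields $Red_{\tau, v}(R_1(w)) = R_1'(w)$ and hence $R' = R_1' \star R_2$; otherwise $R_2$ is the reducible (constant) child and $R' = R_1 \star Red_{\tau, v}(R_2)$. Either way the hole, filled by $w$, is untouched, so $w$ persists; the case with the hole in $R_2$ is symmetric.

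Finally I would iterate. Since $Red^*_{\tau, v}$ is obtained by applying $Red_{\tau, v}$ until a $\tau$-irreducible tree is reached, and each step sends $R(w)$ to some $R'(w)$ while the properties $w \neq \tau$ and $|w| \geq |\tau|$ are intrinsic to $w$ (so the disjointness invariant is automatically restored), a straightforward induction on the number of reduction steps gives $Red^*_{\tau, v}(t) = R^*(w)$ for some context $R^*$, whence $w$ is a sub-object of $Red^*_{\tau, v}(t)$. The main obstacle, and the step deserving the most care, is the single-step lemma: one must ensure that the recursion of $Red_{\tau, v}$ never enters the hole, which is exactly where the hypotheses $|w| \geq |\tau|$ and $w \neq \tau$ are used to rule out any occurrence of $\tau$ containing $w$ — including an occurrence that might be created afresh after substituting the shorter $v$.
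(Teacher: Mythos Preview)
Your proposal is correct and follows essentially the same approach as the paper: both argue that a single application of $Red_{\tau,v}$ preserves $w$ as a subtree (by induction on the structure of $t$, using that in trees subtree occurrences are nested or disjoint), and then iterate to $Red^*_{\tau,v}$. Your version is more careful than the paper's terse proof, in particular you make explicit the length hypothesis $|w|\geq|\tau|$ and the exclusion $R(w)\neq\tau$, points the paper glosses over.
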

\begin{proof}
By definition of $Red^*_{\tau, v}$, it is enough to show that if $w$ is
a subtreee of $t$ then it is a subtree of $Red_{\tau,v}(t)$.
The proof is by induction on $|t|$ such that $w$ is a subtree of $t$.
If $t$ is $\tau$-irreducible then $Red_{\tau, v}(t) = t$ and the result
is proved.
Otherwise, $t = t_1 \star t_2$, with $w$ subtree of some $t_i$, and
$Red_{\tau, v}(t) = Red_{\tau, v}(t_1) \star t_2$ or
$Red_{\tau, v}(t) =  t_1 \star Red_{\tau, v}(t_2)$. In both cases,
$w$ is a subtree of $Red_{\tau, v}(t)$.
\end{proof}

\subsection{Canonical representative}\label{sec:canT}

For trees, we can improve Proposition \ref{prop:red*G}.

\begin{proposition}\label{prop:red*}
$Red^*_{\tau, v}(u \star w)
 = Red^*_{\tau, v}(Red^*_{\tau, v}(u) \star Red^*_{\tau, v}(w)).$
\end{proposition}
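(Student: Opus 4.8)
The goal is to prove Proposition \ref{prop:red*}: that for trees,
\[
Red^*_{\tau, v}(u \star w) = Red^*_{\tau, v}(Red^*_{\tau, v}(u) \star Red^*_{\tau, v}(w)).
\]
The plan is to leverage the already-established Proposition \ref{prop:red*G}, which handles reduction of the left argument, namely $Red^*_{\tau, v}(u \star w) = Red^*_{\tau, v}(Red^*_{\tau, v}(u) \star w)$. The key point exploiting the tree structure (as opposed to words) is Fact \ref{uniquedecompo}: the decomposition $u \star w$ is \emph{unique}, so the leftmost-occurrence reduction process never straddles the boundary between the two subtrees. This means I can reduce the two sides independently, and a symmetric analogue of Proposition \ref{prop:red*G} should hold for the right argument as well.

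First I would establish the right-hand analogue of Proposition \ref{prop:red*G}, namely
\[
Red^*_{\tau, v}(u \star w) = Red^*_{\tau, v}(u \star Red^*_{\tau, v}(w)).
\]
The proof mirrors that of \ref{prop:red*G}: writing $Red^*_{\tau, v}(w) = Red^q_{\tau, v}(w)$, one argues by induction that $Red^p_{\tau, v}(u \star w) = Red^{p-i}_{\tau, v}(u \star Red^i_{\tau, v}(w))$ for $i = 0, \ldots, q$. The crucial supporting observation is that, by unicity of tree decomposition, whenever $u \star w$ is $\tau$-reducible with its left component already $\tau$-irreducible, the leftmost occurrence of $\tau$ lies entirely inside $w$, so $Red_{\tau, v}(u \star w) = u \star Red_{\tau, v}(w)$; symmetrically a reduction on the left leaves $w$ untouched. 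Thus reductions on the two subtrees commute and can be interleaved freely.

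Once both one-sided reduction lemmas are in hand, I would compose them. Starting from the right-hand side of the claimed identity and applying Proposition \ref{prop:red*G} to strip the already-reduced left argument, then the right-hand analogue to strip the already-reduced right argument, I obtain
\[
Red^*_{\tau, v}(Red^*_{\tau, v}(u) \star Red^*_{\tau, v}(w)) = Red^*_{\tau, v}(u \star Red^*_{\tau, v}(w)) = Red^*_{\tau, v}(u \star w),
\]
which is exactly the statement. The only care needed is the order of application and checking that each invocation satisfies the hypotheses of the one-sided lemmas; these are immediate since $Red^*_{\tau, v}(\cdot)$ always produces a $\tau$-irreducible object.

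The main obstacle is proving the right-hand analogue of Proposition \ref{prop:red*G} cleanly, and in particular nailing down the claim that reductions confined to $u$ and reductions confined to $w$ never interfere. This is precisely where the non-associativity (and the unicity of decomposition, Fact \ref{uniquedecompo}) of the tree product is essential: for words the factor $\tau$ could span the concatenation point, so the leftmost occurrence is not localized to one side, and this proposition fails — which is why it is stated only for trees. I expect the bulk of the work to be the careful inductive bookkeeping showing $Red_{\tau, v}$ acts independently on the two subtrees, after which the composition step is routine.
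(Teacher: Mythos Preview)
Your approach is correct and rests on the same key observation as the paper: for trees, when the left subtree $u$ is $\tau$-irreducible the definition of $Red$ gives $Red_{\tau,v}(u\star w)=u\star Red_{\tau,v}(w)$. The difference is purely organizational. The paper applies Proposition~\ref{prop:red*G} \emph{first}, replacing $u$ by $Red^*_{\tau,v}(u)$; once the left argument is irreducible, the right-hand analogue is a one-line consequence of Definition~\ref{def:RedTree} and no interleaving/commutation argument is needed. You instead aim for the right-hand analogue $Red^*_{\tau,v}(u\star w)=Red^*_{\tau,v}(u\star Red^*_{\tau,v}(w))$ for \emph{arbitrary} $u$, which forces you into the bookkeeping you anticipate (the induction you sketch does not literally ``mirror'' \ref{prop:red*G}, since when $u$ is reducible the leftmost step acts on $u$, not $w$; you must first exhaust the $u$-reductions inside the induction). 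Both routes are valid; the paper's ordering simply buys you the special case of the right-hand analogue that you actually need, for free.
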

\begin{proof}
By taking Proposition \ref {prop:red*G} into account, we just have to
prove that
$Red^*_{\tau, v}(u \star w)
 = Red^*_{\tau, v}(u \star Red^*_{\tau, v}(w))$
when $u$ is $\tau$-irreducible. This a consequence of the definition
of the leftmost reduction for trees:
$Red_{\tau, v}(u \star w) = u \star Red_{\tau, v}(w)$.
\end{proof}
We now prove that Assumption \ref{f:exUniCanRep} holds for our
algebra of binary trees.
\begin{proposition}\label{ass1.t}
$\forall P \in \ca_{1,1} \qquad Red_{\tau, v}^*(P(\tau))
 = Red_{\tau, v}^*(P(v)).$
\end{proposition}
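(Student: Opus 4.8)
The plan is to prove the identity by structural induction on $P$, viewed as a tree of $\ca(\S \cup \set{y})$ containing exactly one occurrence of the variable $y$. Throughout I work under the standing hypothesis $|v| < |\tau|$ (recall $\tau \in \ct$ means $|\tau| \geq 2$), which is what guarantees that each application of $Red_{\tau,v}$ strictly decreases length, so that $Red^*_{\tau,v}$ terminates and is well defined. Since $P$ contains $y$ it is neither $\bz$ nor a letter of $\S$, so by Fact \ref{uniquedecompo} there are exactly two cases to treat.

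In the base case $P = y$ one has $P(\tau) = \tau$ and $P(v) = v$. Because $\tau$ is trivially a sub-object of itself it is $\tau$-reducible, and Definition \ref{def:RedTree} gives $Red_{\tau,v}(\tau) = v$; hence $Red^*_{\tau,v}(\tau) = Red^*_{\tau,v}(v) = Red^*_{\tau,v}(P(v))$, which settles this case.

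For the inductive step write $P = P_1 \star P_2$. By unique decomposition (Fact \ref{uniquedecompo}) the single occurrence of $y$ lies in exactly one factor; assume without loss of generality that it occurs in $P_1$, so that $P_1 \in \ca_{1,1}$ while $P_2 \in \ca$ is a constant tree (the case where $y$ occurs in $P_2$ is identical, since Proposition \ref{prop:red*} treats the two factors symmetrically). As $P_2$ contains no $y$, the substitution leaves it inert: $P(\tau) = P_1(\tau) \star P_2$ and $P(v) = P_1(v) \star P_2$. Applying the tree-specific reduction identity of Proposition \ref{prop:red*} to each side yields
$$Red^*_{\tau,v}(P(\tau)) = Red^*_{\tau,v}\bigl(Red^*_{\tau,v}(P_1(\tau)) \star Red^*_{\tau,v}(P_2)\bigr)$$
together with the analogous equation for $P(v)$. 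The induction hypothesis applied to the smaller polynomial $P_1$ gives $Red^*_{\tau,v}(P_1(\tau)) = Red^*_{\tau,v}(P_1(v))$, so the two right-hand sides coincide and the desired equality follows.

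The entire content of the argument is concentrated in the appeal to Proposition \ref{prop:red*}: the fact that for trees one may fully reduce both children of a node independently before recombining them. This is precisely where the unique-decomposition (non-associative) structure of trees is exploited, and it is what makes the tree case short. I expect the only point requiring genuine care to be the bookkeeping that the variable sits in exactly one factor and that the constant factor is unaffected by the substitutions $y \mapsto \tau$ and $y \mapsto v$ — both guaranteed by Fact \ref{uniquedecompo} — after which the induction closes mechanically.
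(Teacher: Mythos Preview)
Your proof is correct and follows essentially the same approach as the paper: structural induction on $P$, with the base case $P=y$ and the inductive step handled via Proposition~\ref{prop:red*}. The only cosmetic difference is that the paper writes the induction hypothesis uniformly as $Red_{\tau,v}^*(P_i(\tau)) = Red_{\tau,v}^*(P_i(v))$ for $i=1,2$ (the constant factor satisfying this trivially), whereas you explicitly separate out the constant factor and apply the hypothesis only to the one containing $y$; this is arguably clearer bookkeeping but changes nothing of substance.
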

\begin{proof}
The proof is by induction on $|P|$. If $P = y$ then
$Red_{\tau, v}^*(\tau) = Red_{\tau, v}^*(v) = v$.

If $P = P_1 \star P_2$ then by Proposition \ref{prop:red*},
\begin{align*}
Red_{\tau, v}^*(P(\tau)) &=
Red_{\tau, v}^*( Red_{\tau, v}^*( P_1(\tau)) \star
 Red_{\tau, v}^*(P_2(\tau))), {\hbox {and }}\\
Red_{\tau, v}^*(P(v))
 &=Red_{\tau, v}^*( Red_{\tau, v}^*( P_1(v)) \star
  Red_{\tau, v}^*(P_2(v))).
\end{align*}
Then, by the induction hypothesis,
$Red_{\tau, v}^*( P_i(v)) =  Red_{\tau, v}^*(P_i(\tau))$,  for $i=1,2$,  and thus \\
$Red_{\tau, v}^*( P(v))$ =  $Red_{\tau, v}^*(P(\tau))$.
\end{proof}
\subsection{Strongly irreducible trees}
\label{sec:polT}

The following Proposition assures that Assumption
\ref{f:exUniCanRep} holds for trees.

\eject

\begin{proposition}\label{ass2.t}
For all $\tau \in \ct$  and for all
$\tau$-irreducible unary polynomials $P$ the following property
holds.

If for all $u \in \ca$ such that $\wid u \leq 1$, $P(u)$ is
$\tau$-reducible, then there exists $\theta \in \ca$ of length 1 and a
strongly $\tau$-irreducible subtree $w$ of $P(\theta)$ of length not
less than $|\tau|$ (i.e., $|w| \geq |\tau|$).
\end{proposition}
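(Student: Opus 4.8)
The plan is to turn the hypothesis into a single length inequality, and then to produce one cleverly chosen $\theta$ for which $P(\theta)$ is itself $\tau$-irreducible. First, by Proposition~\ref{p:ir->sirr}, in the tree algebra any $\tau$-irreducible subtree of length at least $|\tau|$ is automatically strongly $\tau$-irreducible; so it suffices to find a $\theta \in \ca$ of length $1$ and a $\tau$-irreducible subtree $w$ of $P(\theta)$ with $|w| \geq |\tau|$. From the hypothesis I would use only one quantitative consequence: applying it to the leaf $u = a$ shows that $P(a)$ is $\tau$-reducible, so $\tau$ occurs as a subtree of $P(a)$ and hence $|\tau| \leq |P(a)| = |P(\bz)| + k$, where $k = |P|_y$ is the degree of $P$.

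The heart of the argument is an occurrence analysis, carried out for a fixed $\theta$ of length $1$. Suppose $\tau$ occurs as a subtree of $P(\theta)$; by Fact~\ref{uniquedecompo} this occurrence is rooted at a well-defined node. That node cannot lie strictly inside a substituted copy of $\theta$, nor be a leaf of the skeleton of $P$, for then the subtree rooted there would have length at most $|\theta| = 1 < 2 \leq |\tau|$. So the root of the occurrence is an internal node $\nu$ of the skeleton of $P$, and the subtree it carries equals $P_\nu(\theta)$, where $P_\nu$ is the sub-polynomial of $P$ rooted at $\nu$. Now either $P_\nu$ contains no $y$, in which case it is a constant sub-object of $P$, hence $\tau$-irreducible because $P$ is a $\tau$-irreducible polynomial (Definition~\ref{def:reducibleP}), so $P_\nu \neq \tau$; or $P_\nu$ contains a $y$, in which case a substituted copy of $\theta$ is a subtree of $P_\nu(\theta) = \tau$, forcing $\theta$ to have no more nodes than $\tau$.

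It then remains to defeat the second case by choosing $\theta$ large: I would take $\theta$ of length $1$ but with strictly more nodes than $\tau$ --- for instance by iterating $t \mapsto t \star \bz$ from a single leaf $a$, which keeps the length equal to $1$ while the number of nodes grows without bound. For such a $\theta$ the second case is impossible and the first never occurs, so $P(\theta)$ contains no copy of $\tau$, i.e. $P(\theta)$ is $\tau$-irreducible. Taking $w = P(\theta)$ finishes the plan: it is $\tau$-irreducible, of length $|P(\bz)| + k \geq |\tau|$ by the inequality above, hence strongly $\tau$-irreducible by Proposition~\ref{p:ir->sirr}, and it is trivially a subtree of $P(\theta)$. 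The step I expect to be most delicate is the occurrence analysis itself: ``large'' must be measured by the number of nodes --- counting internal nodes and all leaves, including the empty ones --- rather than by the length $|\cdot|$, since a length-$1$ tree can carry arbitrarily many nodes, and it is exactly this node count that lets a substituted $\theta$ overflow the fixed tree $\tau$. Once this is set up correctly the bookkeeping is routine; in particular the hypothesis enters only through $|P(\bz)| + k \geq |\tau|$, and the largeness assumption $|\tau| \geq 2k+4$ of Assumption~\ref{prop:exs} is not needed for trees.
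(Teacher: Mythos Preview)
Your argument is correct, and it takes a genuinely different route from the paper's. The paper works with the pair $t=\sigma\star\bz$, $t'=\bz\star\sigma$: from the $\tau$-reducibility of $P(t)$ it extracts a non-constant sub-polynomial $Q$ with $Q(t)=\tau$, observes that $Q(t')$ has the same length $|\tau|$ but is $\neq\tau$ (since $t\neq t'$ and $Q$ is non-constant), and concludes that $w=Q(t')$ is $\tau$-irreducible, taking $\theta=t'$. Your proof instead manufactures a single length-$1$ tree $\theta$ whose node count exceeds that of $\tau$ and shows that $P(\theta)$ itself is $\tau$-irreducible, so $w=P(\theta)$ already works. The paper's approach yields a $w$ of length exactly $|\tau|$ and uses only two fixed small test trees; your approach is arguably cleaner in that it needs only one $\theta$ and makes the underlying node-count mechanism explicit --- a mechanism the paper's proof also relies on tacitly when it asserts without further comment that $Q(t')$ is $\tau$-irreducible (this step in the paper ultimately rests on the same fact that a non-constant sub-polynomial evaluated at a length-$1$ tree cannot strictly embed $\tau$). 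Both proofs correctly note that the bound $|\tau|\geq 2k+4$ from Assumption~\ref{prop:exs} is irrelevant for trees.
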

\begin{proof}
Let $\tau \in \ct$, which has length at least 2. Let $P$ be a non
constant $\tau$-irreducible polynomial such that for all $u \in \ca$
with length $\wid u \leq 1$, $P(u)$ is $\tau$-reducible. Let
$\s \in \S$, and let $t=\s\star \bz$ and $t'=\bz\star \s$, $t \neq t'$.

As $P(t)$ is $\tau$-reducible, it must contain $\tau$. But since $P$
is $\tau$-irreducible, there exists a non constant sub-polynomial
$Q$ of $P$ such that $Q(t) = \tau$. Then $|Q(t)| = |Q(t')| = |\tau|$
and, as $Q$ is non-constant, $Q(t') \neq \tau$. It follows that $Q(t')$
is $\tau$-irreducible, hence  strongly $\tau$-irreducible by
Proposition \ref{p:ir->sirr}. We set $\theta = t'$ and $w = Q(t')$.
\end{proof}
%
\section{The case of words}
\label{sec:word}
For words, proving Assumptions \ref{f:exUniCanRep} and
\ref{prop:exs} requires more work because unicity of the
decomposition fails in the free
monoid. \smallskip \\
As shown in Remark \ref{rem:nonUnique}, Assumption
\ref{f:exUniCanRep} does not hold for any word $\tau$.
Indeed, Assumption \ref{f:exUniCanRep} fails as soon as $\tau$
self-overlaps, i.e., when there exists a word $t$ which is both a
strict prefix and a strict suffix of $\tau$. For instance, if
$\tau = aba$, $ab \sim_{aba, \ve} ababa \sim_{aba, \ve} ba$,
while $Red_{aba, \ve}(ab) = ab \neq ba = Red_{aba, \ve}(ba)$.
Obviously, words such that $a^nb^n$ do not self-overlap and thus
satisfy Assumption \ref{f:exUniCanRep}.
But we also need that these words satisfy Assumption
\ref{prop:exs}. The condition that $\tau$ is not self-overlapping is
not sufficient to satisfy Assumption \ref{prop:exs}. For instance, let
$\tau = aabb$ and $P = aa x_1 bb$,  which is $\tau$-irreducible.
The factors of length $\geq 4$ of $P(a) = aaabb$ and
$P(b) = aabbb$ are $aaabb,\  aabbb,\ aabb,\ aaab,\ abbb$. None
of them is strongly $\tau$-irreducible: $aaabb,\  aabbb,\ aabb$ are
$\tau$-reducible, and $aaab,\ abbb$ satisfy one of the forbidden
property (1) or (2) of Proposition \ref{prop:str}.
We thus have to introduce a stronger constraint to define a
suitable $\ct$, which turns out to be
\begin{eqnarray}
\ct &=& \{a^nbab^n \mid n >1\}  \label{tau:word}
\end{eqnarray}

\subsection{Canonical representative}
\label{sec:canW}
\begin{proposition}\label{ass1.w}
For all $P$ in  $\ca_{1,1}$ and  $\tau=a^nbab^n\in\ct $,\qquad
 $Red_{\tau, v}^*(P(\tau)) = Red_{\tau, v}^*(P(v))$.
\end{proposition}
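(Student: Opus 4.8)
The proposition asserts that for words, with the special set $\ct=\{a^nbab^n\mid n>1\}$, Assumption \ref{f:exUniCanRep} holds: reducing $P(\tau)$ and $P(v)$ under the leftmost reduction $Red_{\tau,v}^*$ yields the same canonical representative, for every $P\in\ca_{1,1}$ and every $v$. The key structural feature being exploited is that $\tau=a^nbab^n$ is non-self-overlapping: no strict prefix of $\tau$ is also a strict suffix. This is what will make the leftmost reduction confluent and well-behaved on words, unlike the bad example $\tau=aba$ from Remark \ref{rem:nonUnique}.

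**The plan.** My plan is to reduce Assumption \ref{f:exUniCanRep} to a confluence/Church–Rosser statement for the rewriting system that replaces occurrences of $\tau$ by $v$. The abstract argument of Proposition \ref{prop:sim} already shows that once this assumption holds, $Red_{\tau,v}^*$ is a genuine canonical form, so here the work is precisely to verify the assumption for the word case. Since $P\in\ca_{1,1}$ has exactly one occurrence of $y$, I can write $P=\alpha\, y\,\beta$ with $\alpha,\beta\in\ca$, so that $P(\tau)=\alpha\tau\beta$ and $P(v)=\alpha v\beta$. The essential point is that $P(\tau)$ and $P(v)$ differ in one controlled spot, and I want to show their leftmost-reduction normal forms coincide. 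First I would establish the crucial \emph{non-overlapping lemma}: because $\tau=a^nbab^n$ with $n>1$ is not self-overlapping, any two occurrences of $\tau$ inside an arbitrary word $w$ are either disjoint or nested in a trivial way — in fact for this $\tau$ distinct occurrences must be disjoint, since an overlap would force a common prefix/suffix of $\tau$ (this is exactly the single internal $bab$ that cannot align with the $a^n\cdots b^n$ blocks). This disjointness is the engine of the whole proof.

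**The core reasoning.** Using disjointness of occurrences, I would argue that the one-step reduction relation $\to$ (replace any one occurrence of $\tau$ by $v$, not just the leftmost) is locally confluent: if $w\to w_1$ and $w\to w_2$ by reducing two different occurrences, those occurrences are disjoint, so reducing the other occurrence afterwards gives a common descendant $w_3$. Combined with termination (each reduction strictly decreases length, since $|v|<|\tau|$), Newman's Lemma yields global confluence and hence uniqueness of normal forms for the full rewriting system. The leftmost reduction $Red_{\tau,v}^*$ computes one particular normal form, and by uniqueness it equals the normal form. Now $P(\tau)=\alpha\tau\beta\to\alpha v\beta=P(v)$ is itself a single reduction step of this system (reducing the distinguished $\tau$), so $P(\tau)$ and $P(v)$ have the same normal form, which is exactly $Red_{\tau,v}^*(P(\tau))=Red_{\tau,v}^*(P(v))$. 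One subtlety I must handle carefully: the occurrence of $\tau$ inside $P(\tau)=\alpha\tau\beta$ might not be a ``fresh'' occurrence — new occurrences of $\tau$ could be created straddling the boundaries between $\alpha$, the inserted block, and $\beta$. The non-overlapping property of $\tau$ is what controls this, ensuring that boundary-crossing occurrences still interact only through disjoint reductions and do not destroy confluence.

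**The main obstacle.** The hard part will be the rigorous treatment of boundary effects and created occurrences: after substituting $v$ for $\tau$ (or for $y$), the junctions $\alpha v$ and $v\beta$ may create brand-new occurrences of $\tau$ that were not present in either $P(\tau)$ or $P(v)$ separately, and I must show the two reduction processes stay synchronized despite this. I expect the cleanest route is to prove the abstract confluence statement for the rewriting system $\to$ on all words first — relying only on disjointness of occurrences of the non-self-overlapping word $\tau$ — and then derive the proposition as an immediate corollary, since $P(\tau)\to P(v)$ is a single step and confluence forces equal normal forms. The combinatorial heart, which I would isolate as a lemma, is precisely the claim that for $\tau=a^nbab^n$ with $n>1$, any two occurrences in a word are disjoint; proving this amounts to checking that a nontrivial overlap of $\tau$ with a shifted copy of itself is impossible, which follows from comparing the unique central factor $bab$ against the surrounding $a$- and $b$-blocks.
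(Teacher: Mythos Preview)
Your proposal is correct and reaches the same conclusion as the paper, but via a genuinely different route. The paper does not invoke abstract rewriting theory at all: it writes $P=uyw$, uses Proposition~\ref{prop:red*G} to replace $u$ by its normal form $s=Red_{\tau,v}^*(u)$, and then argues directly that the leftmost occurrence of $\tau$ in $s\tau w$ is exactly the displayed one. The latter is precisely where unborderedness of $\tau=a^nbab^n$ is used: an earlier occurrence would overlap the displayed $\tau$ and produce a nontrivial border of $\tau$, which is ruled out by inspection of the word $a^nbab^n$. One step of $Red_{\tau,v}$ then turns $s\tau w$ into $svw$, and the result follows. Your approach instead packages the same combinatorial fact (unborderedness $\Rightarrow$ disjoint occurrences) into local confluence of the full rewriting system, appeals to Newman's Lemma for uniqueness of normal forms, and deduces the statement from the single step $P(\tau)\to P(v)$. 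What each buys: the paper's argument is shorter and entirely self-contained, with no external lemma beyond Proposition~\ref{prop:red*G}; your argument is more conceptual, yields the stronger statement that $Red_{\tau,v}^*$ is a canonical form for the whole congruence $\sim_{\tau,v}$ in one stroke, and would transfer unchanged to any unbordered $\tau$ without reproving anything. Your worry about ``created occurrences'' at the boundaries is harmless for exactly the reason you identify: confluence only asks for \emph{some} common reduct, so newly created occurrences do not obstruct the diamond.
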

\begin{proof}
The proof is by induction on $|P|$. \medskip

{\em Basis}. If $P = y$ then
$Red_{\tau, v}^*(\tau) = Red_{\tau, v}^*(v) = v$. \medskip

{\em Induction}. Let $P = uyw$ and let
$s = Red_{\tau, v}^*(u) \in \T_\tau$. By Proposition \ref{prop:red*G},
$Red_{\tau, v}^*(P(\tau)) = Red_{\tau, v}^*(s\tau w)$ and
$Red_{\tau, v}^*(P(v)) = Red_{\tau, v}^*(svw)$. Thus, to prove the
result it is enough to show that $Red_{\tau, v}(s\tau w) = svw$, i.e.,
that the shortest prefix $s\tau$ of $s\tau w$ is $s\tau$. Let us
assume that there exists $s'$  such that $s'\tau$ is a strict prefix
of $s\tau$. Since since $s \in \T_\tau$, $s'\tau$ is not a prefix of
$s$.

\begin{figure}[!h]
\centering
\begin{picture}(250,50)(0,0)
   \put(0,){\line(1,0){250}}
   \put(0,15){\line(1,0){250}}
   \put(0,30){\line(1,0){250}}
   \put(0,45){\line(1,0){250}}
   \put(70,30){\line(0,1){15}}
   \put(150,15){\line(0,1){30}}
   \put(120,0){\line(0,1){30}}
   \put(200,15){\line(0,-1){15}}
   \put(0,0){\line(0,1){15}}
   \put(0,30){\line(0,1){15}}

   \put(60,7){$s$}
   \put(160,7){$\tau$}
   \put(35,37){$s'$}
   \put(110,37){$\tau$}

   \put(135,22){$t$}
\end{picture}
\end{figure}

\noindent It follows that there exists a nonempty word $t$, with
$0 < |t| <|\tau|$, which is both a suffix and a prefix of
$\tau = a^nbab^n$, such that $s'\tau = st$.

The first letter of $t$ has to be $a$ and its last letter $b$.
Therefore $a^nb$ is a prefix of $t$ and $ab^n$ is a suffix of $t$,
hence $t = a^nbab^n$, contradicting  $ |t| <|\tau|$.
\end{proof}
%
\subsection{Strongly irreducible words}

We state a sufficient condition for a word $w \in \ca$  to be strongly
$\tau$-irreducible.
\begin{proposition}\label{prop:str}
A nonempty word $w$ is strongly $\tau$-irreducible if it is
$\tau$-irreducible and it has the additional properties that $\tau$
and $w$ do not overlap, i.e., there do not exist words $u,  t', t$ such
that $t \notin \set{\ve, \tau}$  and
\begin{enumerate}
\itemsep=0.9pt
\item either  $w = ut$ and $\tau = tt'$,
\item or $\tau = t't$ and $w = tu$.
\end{enumerate}
\end{proposition}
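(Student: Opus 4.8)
The plan is to verify the two clauses of strong $\tau$-irreducibility (Definition~\ref{def:str}) in turn. The length clause $|w|\geq|\tau|$ I would take as implicitly part of the hypothesis, since it belongs to the conclusion and cannot be dropped: for $\tau=a^nbab^n$ the word $a^jb$ with $j<n$ is nonempty, $\tau$-irreducible and shares no border with $\tau$, yet is too short to be strongly $\tau$-irreducible. All the work therefore goes into the factor-preservation clause: whenever $w$ is a factor of some $t\in\ca$, then $w$ is a factor of $Red^*_{\tau,v}(t)$ for every $v$ with $|v|<|\tau|$.

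First I would reduce this to a single reduction step. By definition $Red^*_{\tau,v}(t)=t$ when $t$ is $\tau$-irreducible, and $Red^*_{\tau,v}(t)=Red^*_{\tau,v}(Red_{\tau,v}(t))$ otherwise, with $|Red_{\tau,v}(t)|=|t|-|\tau|+|v|<|t|$. Hence a strong induction on $|t|$ reduces the claim to: if $t$ is $\tau$-reducible and $w$ is a factor of $t$, then $w$ is a factor of $Red_{\tau,v}(t)$.

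For the single step, I would write $t=p\,\tau\,q$ for the (leftmost) occurrence of $\tau$ erased by $Red_{\tau,v}$, so that $Red_{\tau,v}(t)=p\,v\,q$, and then compare the interval occupied by the chosen occurrence of $w$ in $t$ with the interval occupied by that $\tau$. If the two intervals are disjoint, the occurrence of $w$ lies entirely within the prefix $p$ or entirely within the suffix $q$, and so survives unchanged in $p\,v\,q$; this is the only situation that can actually occur. The remaining possibility is that the intervals meet, and I would split this into the four mutually exhaustive cases of how two meeting intervals can be positioned. If the $\tau$-interval is contained in the $w$-interval, then $\tau$ is a factor of $w$, contradicting the $\tau$-irreducibility of $w$; if the $w$-interval is contained in the $\tau$-interval, then $w$ is a factor of $\tau$ with $|w|\geq|\tau|$, forcing $w=\tau$, again contradicting $\tau$-irreducibility. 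The two partial-overlap cases give exactly the forbidden borders: a nonempty suffix of $w$ equal to a shorter prefix of $\tau$ yields $w=ut$, $\tau=tt'$ with $t\notin\set{\ve,\tau}$ (condition~(1)), and a nonempty prefix of $w$ equal to a shorter suffix of $\tau$ yields $\tau=t't$, $w=tu$ with $t\notin\set{\ve,\tau}$ (condition~(2)); both are excluded by hypothesis.

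The hard part is precisely this overlap analysis: checking that once the two intervals meet the four cases (each containment and the two partial overlaps) are genuinely exhaustive, and translating each partial overlap faithfully into condition~(1) or~(2), with correct bookkeeping that the common border $t$ is neither $\ve$ nor $\tau$. Once all overlap configurations are eliminated, disjointness gives factor-preservation for one step, and the induction closes the argument.
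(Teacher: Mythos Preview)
Your proof is correct and follows the same strategy as the paper's: reduce to a single $Red_{\tau,v}$ step, decompose $t=p\tau q$ around the leftmost occurrence of $\tau$, and argue that $w$ must lie entirely inside $p$ or inside $q$; the paper compresses your four-case interval analysis into the single sentence ``as $w$ is $\tau$-irreducible and $w$ and $\tau$ do not overlap, $w$ is a factor of $w'$ or of $w''$.'' Your remark that $|w|\geq|\tau|$ must be read as an implicit hypothesis is also correct---it is required both by the definition of strong $\tau$-irreducibility and to exclude the case where $w$ is a proper internal factor of $\tau$---and the paper silently assumes it.
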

\begin{proof}
It is enough to show that if a factor $w$ of $t$ satisfies the above
hypothesis, then $w$ is a factor of $Red_{\tau, v}(t)$ when
$|v| < |\tau|$.

Let $t = w'\tau w''$ with $w'$ $\tau$-irreducible. Then
$Red_{\tau, v}(t) = w'vw''$.
As $w$ is $\tau$-irreducible and $w$ and $\tau$ do not overlap, if
$w$ is a factor of $t$, it is a factor of $w'$ or a factor of $w''$,
hence a factor of $Red_{\tau, v}(t) = w'vw''$.
\end{proof}
The following proposition implies Assumption \ref{prop:exs}.
\begin{proposition}\label{ass2.w}
For all $\tau = a^nbab^n \in \ct$ and for all $\tau$-irreducible unary
polynomials $P$ of degree $k$ such that $|\tau| \geq 2k + 4$, the
following property holds.

If $P(\ve)$ is $\tau$-reducible, then there exists
$\theta \in \set{a, b}$ and a strongly $\tau$-irreducible sub-object
$w$ of $P(\theta)$ of length greater than $|\tau|$ (i.e.,
$|w| > |\tau|$).
\end{proposition}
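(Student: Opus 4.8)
```latex
The plan is to prove Assumption \ref{prop:exs} for the specific word
$\tau = a^nbab^n$ by a careful case analysis on how $P(\varepsilon)$ can be
$\tau$-reducible. Since $P$ itself is $\tau$-irreducible (no constant factor of
$P$ contains $\tau$), the only way an occurrence of $\tau$ can appear in
$P(\varepsilon)$ is by \emph{straddling} the position where the variable $y$
was erased: the letters of $\tau$ must come partly from the constant word
immediately to the left of $y$ in $P$ and partly from the constant word
immediately to the right. Writing $P = u\,y\,w$ with $u,w \in \{a,b\}^*$, the
hypothesis ``$P(\varepsilon) = uw$ is $\tau$-reducible'' thus forces a
factorization $\tau = s\,p$ where $s$ is a nonempty suffix of $u$ and $p$ is a
nonempty prefix of $w$ (if either $s$ or $p$ were empty, the occurrence of
$\tau$ would lie entirely in $u$ or entirely in $w$, contradicting
$\tau$-irreducibility of $P$). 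The degree bound $|\tau| \geq 2k+4$ will be used
to guarantee that $\tau$ is long enough that such a straddling occurrence cannot
cheat by also using letters contributed from more distant variables or
coefficients.

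First I would fix the picture $\tau = s p$ with $s$ a suffix of $u$, $p$ a
prefix of $w$, and examine the two candidate substitutions
$\theta \in \{a,b\}$. The key structural fact about $\tau = a^nbab^n$ is its
markedly asymmetric shape: it begins with a long $a$-block $a^n$, has a single
isolated $b$, a single isolated $a$, and ends with a long $b$-block $b^n$. I
would exploit this to show that at most one of the two substitutions
$\theta = a$ or $\theta = b$ can \emph{recreate} an occurrence of $\tau$
straddling the inserted letter: substituting the ``wrong'' letter breaks the
required block structure at the splice point and destroys the straddling
occurrence. Concretely, inserting $\theta$ between $u$ and $w$ gives
$P(\theta) = u\,\theta\,w$, and the factor $s\,\theta\,p$ sits across the splice;
because $\tau$ has essentially unique internal structure, there is a choice of
$\theta$ for which $s\,\theta\,p \neq \tau$ and, more importantly, for which a
suitable factor $w_0$ of $P(\theta)$ of length $> |\tau|$ is both
$\tau$-irreducible and non-overlapping with $\tau$ in the sense of
Proposition \ref{prop:str}.

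Concretely I would take $w_0$ to be the factor of $P(\theta)$ spanning the
inserted letter together with enough surrounding context: something like
$w_0 = s'\,\theta\,p'$ where $s'$ is the maximal relevant suffix of $u$ and $p'$
the maximal relevant prefix of $w$, padded so that $|w_0| > |\tau|$, which is
possible precisely because the constant material on each side of $y$ in a
$\tau$-irreducible $P$ of degree $k$ with $|\tau| \geq 2k+4$ is long enough
(here the bound $2k+4$ buys the slack needed to exhibit a factor strictly longer
than $\tau$). I would then verify the two non-overlap conditions of
Proposition \ref{prop:str}: that there is no proper factorization
$\tau = t\,t'$ with $t$ a suffix of $w_0$, and dually no $\tau = t'\,t$ with $t$
a prefix of $w_0$, other than the trivial $t \in \{\varepsilon, \tau\}$. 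This is
exactly the self-overlap analysis already carried out in the proof of
Proposition \ref{ass1.w}: the first letter of any proper border of $\tau$ must
be $a$ and the last must be $b$, forcing $a^nb$ as a prefix and $ab^n$ as a
suffix of any overlap word, which collapses to $\tau$ itself and yields a
contradiction.

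The main obstacle I anticipate is the bookkeeping in the case analysis: one must
rule out \emph{all} the ways $\tau$ could reappear after substitution, not merely
the straddling occurrence that motivated the choice of $\theta$. In particular
one must check that inserting $\theta = a$ does not create a \emph{new}
occurrence of $\tau$ elsewhere (for instance by lengthening an $a$-block inside
$u$ or $w$ into the $a^n$ prefix or the isolated $a$ of $\tau$), and symmetrically
for $\theta = b$. Handling this cleanly is where the very specific form
$a^nbab^n$ earns its keep: the isolated $b$ between the two $a$-blocks and the
isolated $a$ between the two $b$-blocks act as rigid markers, so that any
occurrence of $\tau$ forces a unique alignment, and lengthening a block by one
letter either produces $a^{n+1}\cdots$ (no longer matching the prefix $a^n$
followed immediately by $b$) or leaves the isolated-letter pattern intact in a
way that does not spawn a fresh copy of $\tau$. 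Once this rigidity is established,
the existence of the desired strongly $\tau$-irreducible factor $w$ follows, and
Assumption \ref{prop:exs} holds for $\ct = \{a^nbab^n \mid n > 1\}$.
```
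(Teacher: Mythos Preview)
Your proposal has a genuine gap: you write $P = u\,y\,w$ with $u,w \in \{a,b\}^*$, which is the shape of a degree-$1$ polynomial. The proposition, however, concerns unary polynomials of arbitrary degree $k$, so $P = c_0\,y\,c_1\,y\,\cdots\,y\,c_k$ with $c_i \in \{a,b\}^*$, and $P(\varepsilon) = c_0 c_1 \cdots c_k$. An occurrence of $\tau$ inside $P(\varepsilon)$ may therefore straddle \emph{several} erased variable positions, not ``the'' position; and when you substitute $\theta$, a copy of $\theta$ is inserted at \emph{every one} of the $k$ positions simultaneously. Your single-splice-point picture, and the candidate $w_0 = s'\,\theta\,p'$ built around it, do not describe $P(\theta)$ at all once $k>1$, so the subsequent non-overlap verification is applied to the wrong object.

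The paper handles this by taking as witness not a neighbourhood of one splice point but the sub-polynomial $Q$ of $P$ (which itself may contain many occurrences of the variable) satisfying $Q(\varepsilon)=\tau$; concretely
\[
Q = a\,x^{p_1}a\,x^{p_2}a\cdots a\,x^{p_n}\,b\,x^{m}\,a\,x^{q_1}b\,x^{q_2}b\cdots x^{q_n}b,
\]
with $p+m+q\le k<n-1$. It then shows by a three-case analysis (using Proposition~\ref{prop:str}) that if $Q(a)$ fails to be strongly $\tau$-irreducible then $m=q=0$, and symmetrically if $Q(b)$ fails then $p=m=0$; both failing would force $Q=\tau$, contradicting the $\tau$-irreducibility of $P$. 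Note also that the self-border argument you borrow from Proposition~\ref{ass1.w} only says $\tau$ does not overlap \emph{itself}; what is needed here is that $Q(\theta)$ does not overlap $\tau$, and that is precisely the case analysis you flag as ``the main obstacle'' but do not carry out.
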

\begin{proof}
Let $\tau = a^nbab^n \in \ct$ and let $P$ be a $\tau$- irreducible
polynomial of degree $k$  such that $P(\ve), P(a)$, and $P(b)$ are
$\tau$-reducible. Note that since $|\tau| = 2n + 2$ the condition
$|\tau| \geq 2k + 4$ is equivalent to $n - 1 > k$.

\medskip
Since $\tau$ is a factor of $P(\ve)$ there exists a factor $Q$ of $P$
such that $Q(\ve) = \tau$, i.e.,
$$Q = ax^{p_1}ax^{p_2}a\cdots ax^{p_n}
b x^m a  x^{q_1}bx^{q_2}b\cdots x^{q_n}b$$
with $k=p + m + q < n - 1$, where $p = p_1 + p_2 + \cdots + p_n$
and $q = q_1 + q_2 + \cdots + q_n$.

\eject
We show that at least one of the words $Q(a)$ or $Q(b)$ is
strongly $\tau$-irreducible.

We first show that if
$Q(a) = a^{n+p}ba^{1+m+q_1}ba^{q_2}b\cdots a^{q_n}b$ is not
strongly $\tau$-irreducible, then $m = q = 0$.

\medskip
If $Q(a)$ is not strongly $\tau$-irreducible, then it is either
$\tau$-reducible and we are in case (i) below, or it is
$\tau$-irreducible and then we are in one of cases (ii) or (iii) below.
\begin{itemize}
\itemsep=0.85pt
\item[(i)] $Q(a)$ is $\tau$-reducible, i.e., $\exists u, v$ such that:
 $Q(a) = u\tau v$, or
\item [(ii)] $Q(a) = u t$ and $\tau = t v$, with  $v \neq \ve \neq t$
(Proposition \ref{prop:str} (1)),  or
\item[(iii)] $Q(a) = t v $ and $\tau = u t$, with $u \neq \ve \neq t$
(Proposition \ref{prop:str} (2)).
\end{itemize}
For both Cases (ii) and (iii), as both $Q(a)$ and $\tau$ start with
$a$ and end with $b$,  the first letter of $t$ is $a$ and its last letter
is $b$.

\medskip
{\em Case(i)} If $\tau$ is a factor of $Q(a)$ then $bab^n$ is a factor
of $Q(a)$. The only factor of $Q(a)$ starting and ending with $b$,
ending with $b$, and containing $(n+1)$ $b$'s is
$ba^{1+m'+m_1}ba^{m_2}b\cdots a^{m_n}b$, which implies
$m' + m_b = 0$.

\medskip
{\em Case(ii)} Assume now $\exists u, v, t$ with $Q(a) = u t$ and
$\tau = t v$, with $v \neq \ve$. As $t$ is a prefix of $\tau$, we have
$t = a^nb$ or $t = a^nbab^{n'}$ with $0 < n' < n$. Since $t$ is a
suffix of $Q(a)$, in all cases, $a^nb$ is a factor of $Q(a)$. As
for all $i$ $q_i \leq q < n - 1$ and, since
$1 + m + q_1 \leq 1 + p+ m + q < 1 + (n - 1) = n$, the unique suffix
of $Q(a)$ starting with $a^nb$ is
$t = a^nba^{1+m +q_1}ba^{q_2}b\cdots a^{q_n}b$. Since $t$ is a
prefix of $\tau$, we have
$n + 1 + m + q = |t|_a \leq |\tau|_a = n + 1$, which implies
$m=q= 0$.

\medskip
{\em Case(iii)} Assume now $\exists u, v, t$ with $Q(a) =t v $ and
$\tau = u t$, with $u \neq \ve$. Since $t$ is a suffix of $\tau$,
then either $t = ab^n$ or $t = a^{n'}bab^n$ with $0 < n' < n$.
Since $t$ is a prefix of $Q(a)$, $a^{n + p}b$ is also a prefix of
$t$. Both cases are impossible since $n + p > n' \geq 1$.

Hence if $Q(a)$ is not strongly $\tau$-irreducible, $m = q = 0$.

\medskip
By a symmetrical reasoning on
$Q(b) = ab^{p_1}ab^{p_2}\cdots ab^{p_n+m+q}a b^{q_n+n}$
we get that if $Q(b)$ is not strongly $\tau$-irreducible, then
$p = m =0$.

Finally, if both $Q(a)$ and $Q(b)$ are not strongly
$\tau$-irreducible then  $p = m = q = 0$, hence $\tau$ is a factor of
$P$, contradicting the $\tau$-irreducibility of $P$. Thus, either
$Q(a)$ or $Q(b)$ is strongly $\tau$-irreducible. Then choose
$\theta \in \{a, b\}$ such that $w = Q(\theta)$ is strongly
$\tau$-irreducible.
\end{proof}
Hence, Theorem \ref{E} holds and if $|\S| \geq 2$ then $\S^*$ is
affine complete. Our proof method can  be extended to the free
commutative monoid with $p$ generators when $p \geq 2$ as
shown in the next subsection.

\subsection{Application to free commutative monoids}
Note that the free commutative monoid with $p$ generators is
isomorphic to $\N^p$. We now prove a variant of Proposition
\ref{ALUN} which immediately implies that the  commutative
binary algebra $\langle \N^p, +, \vec 0\rangle$ is affine complete,
thus giving a very simple proof of already known results
\cite{nobauer, werner}. \smallskip

For $u = \pair{\ell_1, \ldots, \ell_p} \in \N^p$ let
$|u| = \ell_1 + \cdots + \ell_p$ and $|  u|_j = \ell_j$ for
$i = 1, \ldots, p$.

\begin{proposition}\label{ALUNc}
For any  $n$-ary CP function  $f\colon (\N^p)^n \to  \N^p$, with
$p \geq 2$, there exists a  $n$-tuple $\pair{k_1, \ldots, k_n}$ of
natural numbers, called the {\em multidegree} of $f$, such that\medskip

(i) $|f(  u_1, \ldots,   u_n)| = |f(\bz, \ldots, \bz)|
 + \sum_{i = 1}^n k_i.|  u_i|$, and \smallskip

(ii) for all $j=1,\ldots,p$, $|f(  u_1, \ldots,   u_n)|_j
 = |f(\bz, \ldots, \bz)|_j + \sum_{i = 1}^n k_i.|   u_i|_j$
\end{proposition}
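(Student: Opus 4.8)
The plan is to adapt the proof of Proposition \ref{ALUN} to the commutative setting, where the key new feature is that $\N^p$ now carries $p$ independent ``letter-counting'' congruences rather than just the two (the letter $a$ and ``everything else'') used for words and trees. First I would observe that for $\N^p$ the length decomposition is built in: for $u = \pair{\ell_1, \ldots, \ell_p}$ we have $|u| = \sum_j |u|_j$ with $|u|_j = \ell_j$, and each of the $p$ coordinate maps is additive under $+$. This means we do not need the artificial splitting $|u| = |u|_1 + |u|_2$ of Lemma \ref{l:lambdas}; instead each coordinate gives a genuine congruence directly.

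Concretely, I would first establish the analogue of Lemma \ref{l:lambdas} for each coordinate $j \in \{1, \ldots, p\}$. For each fixed $j$, the relation $u \sim v$ iff $|u|_j = |v|_j$ is a congruence on $\langle \N^p, +, \vec 0\rangle$, since it is the kernel of the projection homomorphism onto the $j$-th coordinate. Because $f$ is CP, $|f(u_1, \ldots, u_n)|_j$ depends only on $|u_1|_j, \ldots, |u_n|_j$, so there is a well-defined function $\l_j \colon \N^n \to \N$ with
\[
|f(u_1, \ldots, u_n)|_j = \l_j(|u_1|_j, \ldots, |u_n|_j).
\]
Now the crucial additivity step: fixing all coordinates of each argument except the $j$-th, I increment the $j$-th coordinate of $u_i$ by one (adding a standard basis vector) and compare values. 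Because the monoid is commutative and $|u + v|_j = |u|_j + |v|_j$, the same telescoping argument used in Proposition \ref{ALUN} applies coordinate-wise and yields a constant $k_{i,j}$ such that each unit increment in the $j$-th coordinate of the $i$-th argument changes $|f|_j$ by exactly $k_{i,j}$; summing over increments gives
\[
\l_j(m_1, \ldots, m_n) - \l_j(\vec 0) = k_{1,j} m_1 + \cdots + k_{n,j} m_n.
\]

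The main obstacle, and where $p \geq 2$ is essential, is to show that the slope $k_{i,j} = k_i$ does not actually depend on $j$, so that a single multidegree $\pair{k_1, \ldots, k_n}$ works simultaneously for $|f|$ and for every $|f|_j$. I would handle this by invoking the total-length congruence $u \sim v$ iff $|u| = |v|$, which is also a congruence on $\N^p$; this gives an additive $\l$ for $|f|$ with its own slopes $k_i$, and since $|f| = \sum_j |f|_j$, summing the coordinate identities forces $k_i = \sum_j k_{i,j}$. To collapse the $k_{i,j}$ to a common value I would exploit the freedom, available precisely because $p \geq 2$, to realize a unit increase in total length either along coordinate $j$ or along coordinate $j'$: comparing $f$ on arguments that differ by a unit vector $\vec e_j$ versus $\vec e_{j'}$ but have the same total length shows, via the total-length congruence, that the two induce the same change in $|f|$, whence $k_{i,j} = k_{i,j'}$ for all $j, j'$. (This is exactly the step that fails for $p = 1$, where there is no second coordinate to compare against, mirroring the failure of property $(\cp)$ for a one-letter alphabet.) Setting $k_i$ to be this common value establishes both (i) and (ii) at once, since (ii) is the coordinate-wise identity just proved and (i) follows by summing (ii) over $j$ and using $|f| = \sum_j |f|_j$ together with $|u_i| = \sum_j |u_i|_j$.
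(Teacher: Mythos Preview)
Your argument has a genuine gap at the ``crucial additivity step.'' You assert that the telescoping of Proposition~\ref{ALUN} ``applies coordinate-wise'' to each $\l_j$, yielding constant increments $k_{i,j}$, while at the same time discarding the two-part split $|u|=|u|_1+|u|_2$ of Lemma~\ref{l:lambdas}. But that split is exactly what makes the telescoping work: Lemma~\ref{l:lambdas}(2) gives $\l(m+\vec e_i)-\l(m)=\l_2(\vec e_i)-\l_2(\vec 0)$, a quantity independent of $m$. To run the same argument on $\l_j$ one would need a decomposition $|u|_j=|u|_{j,1}+|u|_{j,2}$ with both summands congruence-invariant, and no such decomposition is available. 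So the constancy of $\l_j(m_1,\ldots,m_i+1,\ldots,m_n)-\l_j(m_1,\ldots,m_i,\ldots,m_n)$ is unproven. Your later step for matching slopes is also off: comparing increments of $|f|$ under $\vec e_j$ versus $\vec e_{j'}$ only says something about $\l$, not about $\l_j$, since $k_{i,j}$ is a slope of $|f|_j$.

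The paper's proof keeps the split, taking $|u|_1=|u|_j$ and $|u|_2=|u|-|u|_j$ for an arbitrary fixed $j$; this is where $p\geq 2$ is used. It then telescopes $\l$ exactly as in Proposition~\ref{ALUN} to obtain~(i), and reads off~(ii) from the special case $q=\vec 0$ of Lemma~\ref{l:lambdas}(2): $\l(m)=\l_1(m)+\l_2(\vec 0)$ gives $\l_j(m)-\l_j(\vec 0)=\l(m)-\l(\vec 0)=\sum_i k_i m_i$. This single identity both establishes the affinity of $\l_j$ and forces its slopes to equal the $k_i$ from~(i), so your ``main obstacle'' of matching $k_{i,j}$ across $j$ never arises. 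If you want to salvage your route, you could first cite Proposition~\ref{ALUN} to get $\l$ affine, then use $\l(\sum_j m_{\cdot,j})=\sum_j \l_j(m_{\cdot,j})$ and set all but one block of variables to zero to obtain $\l_j(m)-\l_j(\vec 0)=\l(m)-\l(\vec 0)$; but that is essentially the paper's argument.
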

\begin{proof}
The proof is almost identical to the proof of Proposition \ref{ALUN}.
We stress here the differences. For an object
$u = \pair{\ell_1, \ldots, \ell_p}\in \N^p$, and an arbitrary element
$j \in \pair{1, \ldots, p}$, let us denote: $|u| = \ell_1+\cdot + \ell_p$,
$|u|_1 = \ell_j$, and $|u|_2 = |u| - |u|_1$.
There exist $\l, \l_1$ such that $\l(m_1, \ldots, m_n)$ is the
common value of all $|f(u_1, \ldots, u_n)|$ and
$\l_1(m_1, \ldots, m_n)$ is the common value of all
$|f(u_1, \ldots, u_n)|_1 = \ell_j$ for an arbitrary $j \in \{1, \dots, p\}$.
Lemma \ref{l:lambdas} and (i) are then proved as in Proposition
\ref{ALUN}.
Moreover
\begin{align*}
\l(m_1, \ldots, m_n)&= \l_1(m_1, \ldots, m_n) + \l_{2}(0, \ldots, 0)\\
&= \l_1(m_1, \ldots, m_n) - \l_1(0, \ldots, 0) + \l_1(0, \ldots, 0)
 +\l_{2}(0, \ldots, 0)\\
&= \l_1(m_1, \ldots, m_n) - \l_1(0, \ldots, 0)
 + \l(0, \ldots, 0) \  \text{[Lemma \ref{l:lambdas} 2]}
\end{align*}
Hence
$\l(m_1, \ldots, m_n) - \l(0, \ldots, 0) = \l_1(m_1, \ldots, m_n)
 - \l_1(0, \ldots, 0)$
which, as $\l_1$ can be any arbitrarily chosen $\l_j$, immediately
implies (ii).
\end{proof}
\begin{corollary}
The commutative algebra $\langle \N^p, +, 0\rangle$ is affine
complete.
\end{corollary}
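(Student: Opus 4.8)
The plan is to deduce the corollary essentially for free from Proposition \ref{ALUNc}, which already carries all the analytic weight. The only preparatory observation I would make is an explicit normal form for polynomial functions on $\langle \N^p, +, \bz\rangle$: since the sole operation is $+$, every $n$-ary polynomial $P \in \ca_n$ reduces, modulo associativity and commutativity, to a formal sum $P = \sum_{i=1}^n k_i x_i + c$, where $k_i = |P|_{x_i} \in \N$ is its multidegree and $c = P(\bz, \ldots, \bz) \in \N^p$ is its constant term. The associated polynomial function is $\pair{u_1, \ldots, u_n} \mapsto c + \sum_{i=1}^n k_i u_i$. The point to stress is that each scalar $k_i$ multiplies the \emph{whole} tuple $u_i$, hence acts by one and the same factor on every coordinate; one cannot form a polynomial that rescales distinct coordinates of a variable differently, because coordinate projection is not a monoid operation. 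Matching this coordinate-uniform behaviour is exactly what is at stake.

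First I would fix a CP function $f \colon (\N^p)^n \to \N^p$ and invoke Proposition \ref{ALUNc} to obtain its multidegree $\pair{k_1, \ldots, k_n}$ together with the coordinatewise identity (ii): for every $j \in \{1, \ldots, p\}$,
$$|f(u_1, \ldots, u_n)|_j = |f(\bz, \ldots, \bz)|_j + \sum_{i=1}^n k_i\,|u_i|_j.$$
The decisive feature here is that the \emph{same} coefficients $k_i$ serve all $p$ coordinates; this is precisely where the hypothesis $p \geq 2$, inherited from Proposition \ref{ALUNc}, is used, and it is what distinguishes $\N^p$ from the case $p = 1$, where $\langle \N, + \rangle$ fails to be affine complete.

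Next I would set $c = f(\bz, \ldots, \bz)$ and take $P = \sum_{i=1}^n k_i x_i + c \in \ca_n$. For arbitrary $u_1, \ldots, u_n \in \N^p$ and each coordinate $j$, the $j$-th coordinate of $P(u_1, \ldots, u_n) = c + \sum_i k_i u_i$ equals $|c|_j + \sum_i k_i |u_i|_j$, which by (ii) is exactly $|f(u_1, \ldots, u_n)|_j$. Since the two tuples agree in every coordinate $j$, we get $P(u_1, \ldots, u_n) = f(u_1, \ldots, u_n)$ for all arguments, so $f$ is the polynomial function $P$. The base case $n = 0$ is immediate, a $0$-ary CP function being a constant, hence a polynomial.

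I do not expect a genuine obstacle at this last stage: the difficulty lives entirely inside Proposition \ref{ALUNc}, which adapts the length argument of Proposition \ref{ALUN} and, crucially, produces coefficients $k_i$ that are independent of the coordinate $j$. The only point deserving care in the write-up is recording the normal form of polynomial functions on $\N^p$ (a single scalar per variable, applied uniformly to all coordinates); once that normal form is in place, the corollary is a one-line coordinatewise comparison with the identity (ii).
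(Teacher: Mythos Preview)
Your proposal is correct and follows essentially the same route as the paper: both invoke Proposition \ref{ALUNc}(ii) to read off that $f(u_1,\ldots,u_n)=c+\sum_i k_i u_i$ coordinatewise, with the single point being that the same $k_i$'s work for every coordinate. Your write-up is more explicit about the normal form of polynomial functions on $\N^p$, but the argument is identical.
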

\begin{proof}
Proposition \ref{ALUNc} (ii) means that the $j$th component
$|f(  x_1, \ldots,   x_n)|_j$ of $f(  x_1, \ldots,   x_n)$
is of the form $c_j+\sum_{i = 1}^n k_i.|   x_i|_j$, for all
$j = 1, \ldots, p$. Hence
$f(  x_1, \ldots,   x_n) =   c+ \sum_{i = 1}^n k_i.   x_i$
is indeed a polynomial.
\end{proof}
\section{Conclusion}

It  is  known that, when the alphabet has just one letter, the free
monoid is not affine complete \cite{cggIJNT}.
It is also known that, when the alphabet has at least two letters,
the free commutative monoid is affine complete  since it is isomorphic to
 a free module or a vector space of dimension at least 2, known to be
 affine complete \cite{nobauer, werner}.

 We here prove that
the (non commutative) free monoid $\S^*$ is affine complete as soon as its alphabet has at
least two letters (generalizing \cite{ALUN17} where the result was proved for $|\S|\geq 3$).

We also prove that the algebra of binary trees with labelled leaves
is affine complete for every nonempty finite alphabet $\S$, i.e., not assuming that $|\S|\geq 2$.
This difference with the case
of the free monoid might seem surprising. However since its product is not associative,
the algebra of trees has more structure, hence more congruences, and thus less CP functions, than the free monoid.

\end{document}